\newtheorem{T}{Theorem}[section]
\newtheorem{Lm}{Lemma}[section]
\newtheorem{D}{Definition}[section]
\newcommand{\un}{\underline}
\newcommand{\A}{\forall}
\newcommand{\inr}{{\rm int}}
\newcommand{\vol}{{\rm vol}}
\newcommand{\cov}{{\rm cov}}
\newcommand{\RgC}{\mathfrak{C}}
\newcommand{\RgM}{\mathfrak{M}}
\begin{document}

\title{Rearrangements of Gaussian fields}
\author{Rapha\"el Lachi\`eze-Rey\thanks{raphael.lachieze-rey@math.univ-lille1.fr}, Youri Davydov\thanks{youri.davydov@math.univ-lille1.fr}\\USTL, Lille}

\maketitle

\paragraph{abstract}

The monotone rearrangement of a  function is the  non-decreasing function with the same distribution. The convex rearrangement of a smooth function is obtained by integrating the monotone rearrangement of its  derivative. This operator can be applied to regularizations of a stochastic process to measure quantities of interest in econometrics.

A multivariate generalization of these operators  is proposed, and  the almost sure convergence of rearrangements of regularized  Gaussian fields is given. For the Fractional Brownian field or the Brownian sheet approximated on a simplicial grid, it appears that the limit object depends on the orientation of the simplices.

\paragraph{keyword} 
random fields, rearrangement, limit theorems, random measures  

\paragraph{MSC} 60G60, 60B12, 60G57

\section*{Introduction and notation}
\label{SecInt}

The following notations will be useful. In $\mathbb{R}^d$, denote by  $+$  the Minkowski addition of sets. The operators $\rm{vol, diam, cl, int},\partial$ resp. stand for the volume, diameter, closure, interior and boundary of a Borel set. Let $\|z\|$ be the euclidean norm of a vector, and $\|z\|_{1}=\sum_{i}|z_{i}|$ its $\mathcal{L}^1$ norm, where the $z_{i}$ are the coordinates of $z$ in the canonical basis $\mathbf{e}=(\mathbf{e}_{1},\dots,\mathbf{e}_{d})$. Denote by $\lambda_{d}$ the Lebesgue measure in $\mathbb{R}^d$, and $\gamma_{d}$ the standard normal distribution. The cardinality of a finite set $E$ is denoted by $|E|$. Given two  random vector-valued variables $X, Y$, let $\rm{cov}(X,Y)$ be their covariance matrix in a predefinite basis $\mathbf{u}$, i.e. 
\begin{equation*}
\cov(X,Y)_{i,j}=\mathbb{E}X_{i}Y_{j}-\mathbb{E}X_{i}\mathbb{E}Y_{j},
\end{equation*} where the $X_{i}$ and $Y_{j}$ are the components of $X$ and $Y$ in $\mathbf{u}$. The covariance matrix of a vector is simply denoted $\cov(X,X)=\cov(X)$. The weak convergence of measures is denoted by $\Rightarrow$.

\paragraph{Preliminary example}
Consider a finite population, arbitrarily labelled with numbers $k$ in $\{1,\dots ,N\}$, for $N\in \mathbb{N}^*$. For $1\leq k \leq N$, the  member $k$ receives an income of a certain resource, denoted by a real number $g(k)$. Now let $\sigma$ be a permutation of $\{1,\dots ,N\}$ that makes the function $k\to g(\sigma(k))$ non-decreasing. Call $\tilde{g}=g\circ \sigma$ the $\emph{monotone rearrangement}$ of $g$. 

 Define $\psi(k)=\sum_{i=1}^k \tilde{g}(i),\, 1 \leq k \leq n$. Since $\tilde{g}$ is monotone, $\psi$ is convex. For $1\leq k \leq N$, $\psi(k)$ represents the total amount of resources detained by the $\frac{k}{n}$-th poorest fraction of the population. Now, call $\overline{\psi}(k)=\frac{k}{n}\psi(n)$. It is the ``equality function'', in the sense that $\psi=\overline{\psi}$ iff all incomes are equal. Also, for some distance $\delta$, the distance $\delta(\psi,\overline{\psi})$ between $\psi$ and its equality function measures the inequalities among the population. 

 If one defines $f(k)=\sum_{i=1}^k g(i),~1\leq k \leq N$, the cumulative income, $\psi$ is called the \emph{convex rearrangement} of $f$. It is indeed the only convex function which has the same increments (but in a different order), and coincides with $f$ at $N$. Consider for instance the case where $\delta$ is the $\mathcal{L}^1$ norm on $\mathbb{R}^N$, normalized by $N$. For a given cumulative income function $f$, the quantity
\begin{displaymath}
N^{-1}\|\psi-\overline{\psi}\|_{1}=\frac{1}{N}\sum_{k=1}^N \left|\psi(k)-\frac{k}{n}f(n)\right|
\end{displaymath}
     retrieves the Gini coefficient, which has played a central role in measuring economic
      inequality since its introduction by Corrado Gini at the beginning of the 20th
      century. The use of the convex rearrangement for measuring economic inequality is discussed in \cite{Z}.\\

The notion of rearrangement, defined above for a discrete population, can be generalized in the continuous framework. If $g_{1}$ is an integrable function on $[0,1]$, and $\sigma$ is a transformation of $[0,1]$ which preserves Lebesgue measure, the function defined by 
\begin{eqnarray}
\label{eq:Rrg}
g_{2}=g_{1}\circ \sigma
\end{eqnarray} 
is a rearrangement of $g$. For any function $g$, denote by $\mu_{g}$ the image of Lebesgue measure under $g$. Relation (\ref{eq:Rrg}) also implies
\begin{eqnarray}
\label{eq:RrgMsr}
\mu_{g_{2}}=\mu_{g_{1}}.
\end{eqnarray}
A function $g_{2}$ is said to be a \emph{rearrangement} of $g_{1}$ if it satisfies (\ref{eq:RrgMsr}). Remark that in general this is not equivalent to (\ref{eq:Rrg}). 
A monotone rearrangement of an integrable function $g$ on $[0,1]$  is a non-decreasing function that is a rearrangement of $g$, and is denoted by $\mathfrak{M}g$.
It is easy to see that 
every integrable function on $[0,1]$ admits a monotone rearrangement, unique up to a negligible set ( see for instance \cite{R}).

 Like in the preliminary example, a convex rearrangement of a differentiable function $f$ is a convex function $\psi$ which derivative is obtained as the rearrangement of the derivative of $f$, i.e. $\mu_{\psi'}=\mu_{ f'}$. If furthermore $\psi$ and $f$ coincide in a predetermined point $z_{0}$, then $\psi$ is \emph{the convex rearrangement of $f$}, and is denoted by $\psi=\mathfrak{C} f$.

 If a function $f$ is irregular, one can take regularizations $f_{n},n\geq 1$, and study asymptotically their rearrangements $b_{n}^{-1}\mathfrak{C}f_{n}$, under the proper renormalization $b_{n}>0$.  Also, the asymptotic rearrangement is consistent, i.e. if $\mu_{g_{n}}$ has a weak limit $\mu$, for a sequence of functions $\{g_{n};~n\in \mathbb{N}\}$, then the monotone rearrangements of the $g_{n}$ also converge, to a function $g$ satisfying $\mu_{g}=\mu$. The result is similar for convex rearrangements, i.e. the convergence of the $\mu_{f_{n}'}$ yields the convergence of the $\mathfrak{C}f_{n}'$.  
 It is of practical and theoretical interest to investigate asymptotic properties of rearrangements. It can be used, for example, to construct estimators of parameters of stochastic processes, and for measuring their fluctuations, see \cite{DZ}. There are also connections between convex rearrangement and other areas of research such as Finance Mathematics and Economics. The Lorenz curve, important in finance mathematics, is a common object in convex rearrangement of Gaussian processes. In the field of econometrics, convex rearrangement can be used to measure the indices of fluctuations of stochastic processes, related to indices of economic inequality, like the Gini index in the preliminary example, see \cite{Z}. The monotone rearrangement of a function $g$ also has a physical meaning, as the solution of the optimal transport problem with transfer plan $g$. the asymptotic convex rearrangement has been studied for many one-dimensional processes, see \cite{DZ} for a survey.\\

We propose the following generalization to a compact $K$ of $\mathbb{R}^d$. For a function $g$ integrable on $K$, call $\mu_{g}$ the image of Lebesgue measure $\lambda_{d}$ under $g$. Then a function $g_{2}$ is a rearrangement of an other function $g_{1}$ if and only if it satisfies (\ref{eq:RrgMsr}). The rearrangement is furthermore said to be monotone if $g_{2}$ is a \emph{monotone function}, i.e the gradient of a convex function. Correspondingly, a function $\psi$ is a convex rearrangement of a real function $f$ if it is convex and yields the same gradient distribution than $f$. 

In Section \ref{SecCvx}, a reduced version of the problem of optimal transport is introduced.  Brenier's theorem, originally designed for this optimal transport problem, is given, and this allows us to  rigorously define monotone and convex rearrangements in higher dimensions. We also prove that, like in the one-dimensional case, the convergence of the convex rearrangements of a family of functions $f_{n}$ is equivalent to the weak convergence of the measures $\mu_{\nabla f_{n}}$. This result, which serves later for rearranging Gaussian fields, is called  the \emph{consistency theorem}.

In section \ref{SecRdmFld}, we introduce the probabilistic framework of this paper. It consists of a  random field $X$ approximated by polygonal fields $X_{n},\,n\geq 1$, interpolating $X$ on a simplicial grid. We give in the Gaussian framework the almost sure weak convergence of the sequence of measures $\mu_{b_{n}\nabla X_{n}}=\lambda_{d}(b_{n}^{-1}\nabla X_{n})^{-1}$ to a measure $\mu$ for proper $b_{n}>0$, under weak  assumptions  on the covariance function of the field $X$. This yields according to the \emph{consistency theorem} the convergence of $b_{n}^{-1}\mathfrak{C}X_{n}$.  The almost sure convergence towards $\mu$ ensures that quantities of interest can be computed from each sample path $X_{n}$. Thus this deterministic limit object, new in the literature, can serve for estimating several quantities related to the regularity and the isotropy of $X$, or more generally to its covariance function, with only one realization.

We show in Section \ref{sec:ex} that this result applies  to the Fractional Brownian field and to the Brownian sheet, and compute the limit measure $\mu$.
At the contrary of the one-dimensional case, we observe through these examples that $\mu$ depends on the method of approximation, and in particular on the orientation of the simplices used in the triangulation. We represented on Figure \ref{fig:brw-sheet} the asymptotic convex rearrangement of the Brownian sheet on $[0,1]^2$, approximated by polygonal fields on a natural triangulation of the plane. 
 
 \begin{figure} [h]
\begin{center}
\includegraphics[scale=0.65, angle=0] {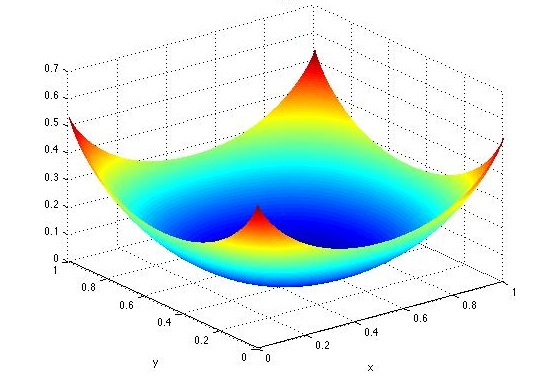}
\end{center}
\caption{Asymptotic convex rearrangement of the Brownian sheet.}
 \label{fig:brw-sheet}
\end{figure}

\section{Monotone rearrangements and optimal transport}
\label{SecCvx}
This section exposes the theoretical material required for   rearranging  multivariate functions with compact support. It is related to the optimal transport problem, in that the monotone rearrangement $\mathfrak{M}S$ of a transport plan $S$ coincides with the optimal solution to the corresponding transport problem. Then, we study the consistency of the monotone rearrangement, needed for rearranging irregular functions, the same way it is done for Brownian motion just below.\\
 
 \subsection{One dimensional case. Convex rearrangement of the Brownian motion}
 \label{sec:Int}
As has been said in the introduction, the monotone rearrangement of a function $g$ is the monotone function that yields the same distribution.
We emphasize here that the central object of the monotone rearrangement  is $\mu_{g}$, the image of Lebesgue measure under $g$. In other words, two functions have the 
same rearrangement if they have the same distribution.

  If now $f$ is an absolutely continuous function on $[0,1]$, i.e such that for almost all $x$ in $[0,1],$ $f(x)-f(0)=\int_{0}^x g(t)dt$ for some integrable function $g$, the \emph{convex rearrangement} of $f$ is the unique convex function $\psi$ verifying $\psi(0)=f(0)$ and $\psi'=\mathfrak{M}f '$ a.e.. Write $\psi=\mathfrak{C} f$, where $\mathfrak{C}$ is the \emph{convex rearrangement operator}.

 For $f$ irregular, one chooses smooth approximations $\{f_n~;~n\geq 1\}$, and studies asymptotically the rearrangements.  If there exists a sequence $\{b_n; n \geq 1\}$ and a convex function $\psi$ such that $\frac{1}{b_n}\mathfrak{C} f_n\to \psi$ a.e., $\psi$ is said to be an \emph{asymptotic convex rearrangement} of $f$ with \emph{renormalizing sequence} $\{b_n; n \geq 1\}$.

Although a rigourous study is not trivial, it is possible to understand better the convex rearrangement machinery in the case of the Wiener process. Take   $X$ a standard Brownian motion on $[0,1]$, with $X_{n}$ its piece-wise linear interpolation on $\left\{\frac{k}{n};~0\leq k \leq n\right\}$, normalized by $\sqrt{n}$ to avoid the divergence of the increments. For each $n$, $X_{n}$ is differentiable a.e., and the image of Lebesgue measure $\lambda_{1}$ under the renormalized derivative is written $$\mu_{n}=\lambda_{1} \left(\frac{1}{\sqrt{n}}X_{n}'\right)^{-1}.$$ The independence of increments implies that $\mu_{n}$ is the empirical distribution of $n$ independent normal variables, and it is clear that it converges weakly to the normal distribution $\gamma_{1}$. It is rigorously proven later, in Theorem \ref{ThmCty}, why this implies that the asymptotic convex rearrangement of $X$ on $]0,1[$ is the Lorenz curve $GL_{1}$, defined as the unique convex function with gradient distribution $\gamma_{1}$. Davydov and Vershik \cite{DV} obtained the strongest result, namely the uniform convergence of $\|\frac{1}{\sqrt{n}}\mathfrak{C}X_{n}-GL_{1}\|_\infty$ to $0$ with probability $1$.
  
 A lot of similar results are obtained with processes that have stationary increments, or are stable, see the survey \cite{DZ}. Azais and Wschebor \cite{AW} also showed that, for $X$ in a certain class of Gaussian processes, if instead of a piece-wise linear approximation, one chooses for $X_n$ a regularization of $X$ by a convolution kernel, then $X$ admits the same asymptotic convex rearrangement, namely the generalized Lorenz curve $GL_{1}$. In this case, the asymptotic convex rearrangement of $f$ seems unambiguous, up to the multiplication by a non-zero constant, in the sense that it does not depend on the approximation method. We will see in Section \ref{sec:ex} that it is not the case for anisotropic multivariate random fields. \\

\subsection{The optimal transport problem and rearrangement operators}
The problem described below is a simplified version of the traditional optimal transport problem, which is fully described and exhaustively discussed in \cite{V}.

  A company has a capacity of production per unit time represented by a measure $\mu$ on $\mathbb{R}^d$, the \emph{production measure}. The quantity produced in area $dx$ per unit time is $\mu(dx)$. This company has to deliver its production to a domain $K$ of $\mathbb{R}^d$, compact and convex, where the demand is uniformly distributed. The cost of transport between a site of production $s$ and a point $z$ in $K$ is denoted by $c(s,z)$, where the \emph{cost function} $c$ is supposed to be measurable and non-negative. A transport plan $S$ is a function which associates to each $z$ in $K$ the corresponding production site $S(z)$, where the product delivered to $z$ comes from. Let $\mu_{S}$ be the image of Lebesgue measure under $S$. We need to have, for all Borel set $B\in \mathcal{B}_{d}$,
\begin{eqnarray}
\label{eq:MasCsv}
\mu_{S}(B)=\mu(B),
\end{eqnarray}
 so that the quantity produced at each production site corresponds to the quantity of product conveyed to the distribution area. The total cost of this transport plan  is hence
\begin{eqnarray*}
C(S)=\int_{z\in K}c(z,S(z)) dz.
\end{eqnarray*}
 Assume that the cost is quadratic, i.e. $c(z,s)=\|z-\zeta\|^2$.
The optimal transport problem consists in finding a transport plan $S:K\to \mathbb{R}^d$ minimizing the cost $C(S)$ under requirement (\ref{eq:MasCsv}). Addressing this issue, suppose that a given transport plan $S$ is modified by switching  the destinations $z$ and $\zeta$ for two productions sites $S(z)$ and $S(\zeta)$ for an infinitesimal quantity of product. The new transport plan is denoted $\tilde{S}$ and the corresponding cost variation is 
$$C(\tilde{S})-C(S)=2\langle z-\zeta,S(z)-S(\zeta)\rangle(dz+d\zeta). $$ Informally, a transport plan will be in some sense locally optimal if, for all $z,\zeta\in K$,
\begin{eqnarray}
\label{eq:MonFct}
\langle z-\zeta,S(z)-S(\zeta)\rangle\geq 0.
\end{eqnarray}
 It turns out that (\ref{eq:MonFct}) and (\ref{eq:MasCsv}) indeed characterize optimal transport plans (see \cite{V}).
 
 The question that naturally arises now is about the existence of such an optimal transport plan.
 That is the purpose of the following theorem.  
\begin{T}
[Brenier]
\label{ThmBre}
~\\
Call $\mathcal{K}(K)$ the class of convex functions on $K$. Let $\mathcal{G}(K)$ be the set of monotone functions on $K$, defined by\\
\begin{eqnarray*}
\mathcal{G}(K)=\left\{\nabla \psi;~\psi \in \mathcal{K}(K), \int_{K}\|\nabla\psi\|<\infty\right\}.
\end{eqnarray*}
Then, if   $\mu$ is a measure on $\mathbb{R}^d$ with finite first moment, there is a unique monotone function in $\mathcal{G}(K)$, denoted by $\mathbf{M}_{\mu}$, such that $\lambda_d \mathbf{M}_{\mu}^{-1}=\mu$.  \end{T}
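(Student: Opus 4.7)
The strategy is to deduce the statement from the classical Brenier / Knott--Smith polar factorization theorem by rephrasing it as a \emph{quadratic} optimal transport problem between $\lambda^d|_K$ and $\mu$, rather than the linear one motivated by the discussion. The point is that the monotonicity condition $\langle z-\zeta, S(z)-S(\zeta)\rangle \geq 0$ characterising elements of $\mathcal{G}(K)$ is exactly the cyclical monotonicity criterion for quadratic optimality, so gradients of convex functions and quadratic optimal transport maps are one and the same object.

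First, I would set up the Monge--Kantorovich problem of minimising $\int |z-y|^2 \, d\pi(z,y)$ over probability couplings $\pi$ of $\lambda^d|_K$ (normalised by $\mathrm{vol}(K)$) and $\mu$, or equivalently of maximising $\int \langle z,y\rangle \, d\pi$. Kantorovich duality, which converges thanks to the finite first moment of $\mu$ and the boundedness of $K$, produces a pair of Legendre-conjugate convex functions $\phi$ on $K$ and $\phi^{*}$ on $\mathbb{R}^d$ satisfying $\phi(z)+\phi^{*}(y) \geq \langle z,y\rangle$ everywhere, with equality $\pi$-almost surely on the support of any optimal coupling.

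Second, since $\lambda^d|_K$ is absolutely continuous, Aleksandrov's theorem yields that $\phi$ is differentiable $\lambda^d$-almost everywhere on $K$; at such points the equality condition forces the optimal target to lie in the singleton $\partial \phi(z) = \{\nabla \phi(z)\}$. Hence the optimal plan is concentrated on the graph of $\mathbf{M}_{\mu} := \nabla \phi$, so $\mathbf{M}_{\mu}$ is a measurable map pushing $\lambda^d|_K$ onto $\mu$; it lies in $\mathcal{G}(K)$ by construction, and the bound $\int_K \|\nabla \phi\| \, d\lambda^d = \int \|y\| \, d\mu(y) < \infty$ comes from the change-of-variables formula together with the finite first moment assumption. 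The convex primitive $\mathbf{C}_{\mu}$ is then determined up to an additive constant, which is fixed by $\mathbf{C}_{\mu}(z_0)=0$ using connectedness of $K$. For uniqueness, suppose $\nabla \phi_{1}, \nabla \phi_{2} \in \mathcal{G}(K)$ both push $\lambda^d|_K$ to $\mu$: each graph is cyclically monotone, hence by the Rockafellar--Knott--Smith criterion both induced couplings are optimal for the strictly convex quadratic cost, and strict convexity combined with absolute continuity of the source forces $\nabla \phi_{1} = \nabla \phi_{2}$ $\lambda^d$-a.e.

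The main obstacle is the regularity--measurability package linking the dual problem to an actual Monge map: Kantorovich duality delivers only an abstract convex potential $\phi$, and one must upgrade absolute continuity of the source to $\lambda^d$-a.e.\ differentiability of $\phi$ via Aleksandrov, then leverage the equality condition to conclude that the optimal Kantorovich coupling is in fact concentrated on a single-valued graph. The possibility that $\mu$ has atoms or a singular part is harmless for this step (regularity flows from the source, not the target) but needs to be handled carefully when invoking the duality identity.
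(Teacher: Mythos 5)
The paper does not actually prove Theorem \ref{ThmBre}: it states the result and defers entirely to \cite{Bre} (and \cite{V}), so there is no internal argument to compare yours against. Your sketch is the standard modern proof --- Kantorovich duality for the quadratic cost in its maximal-correlation form $\int\langle z,y\rangle\,d\pi$, a.e.\ differentiability of the convex potential to collapse the optimal plan onto a graph, and cyclical monotonicity plus absolute continuity of the source for uniqueness --- and it is essentially correct. It is closer in spirit to the Knott--Smith/McCann treatment than to Brenier's original polar-factorization argument, and the correlation form is exactly the right move here, since compactness of $K$ makes $\int\langle z,y\rangle\,d\pi$ finite under only a first-moment hypothesis on $\mu$, whereas the quadratic cost itself could be infinite. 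Three small repairs. First, a.e.\ first-order differentiability of a convex function is not Aleksandrov's theorem (which concerns second-order differentiability); what you need is the elementary fact that a convex function is locally Lipschitz on the interior of $K$, hence differentiable $\lambda^d$-a.e.\ by Rademacher. Second, for $\lambda^d\mathbf{M}_{\mu}^{-1}=\mu$ to be possible at all one must have $\mu(\mathbb{R}^d)=\lambda^d(K)$; you normalise both measures to probabilities, which is fine, but this implicit compatibility condition should be stated. Third, the uniqueness step ``strict convexity combined with absolute continuity of the source forces $\nabla\phi_1=\nabla\phi_2$'' deserves one more line: both induced couplings are optimal, hence so is their average, whose support is cyclically monotone and therefore contained in the subdifferential of a single convex function $\psi$; at $\lambda^d$-a.e.\ $z$ that subdifferential is a singleton, which forces $\nabla\phi_1(z)=\nabla\phi_2(z)=\nabla\psi(z)$.
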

Comments, proof, and a more general result can be found  in  \cite{Bre}.
If a point $z_{0}$ of $K$ is unambiguously defined as ``starting point'', call $\mathbf{C}_{\mu}$ the convex function which gradient is $\mathbf{M}_{\mu}$, satisfying $\mathbf{C}_{\mu}(z_{0})=0$.
 The function $\mathbf{M}_{\mu}$ is the optimal solution of the transport problem with production measure $\mu$.

Theorem \ref{ThmBre} is the proper tool to define high dimensional monotone and convex rearrangements. 
\begin{D}

For an integrable function $g$ on $K$, define $\mathfrak{M}g=\mathbf{M}_{\mu_{g}}$ its monotone rearrangement.

Let $\mathcal{S}(K)$ be the class of functions which are differentiable in a.e. point of $K$ and satisfy
\begin{equation*}
\int_{K}\|\nabla f(z)\|dz<+\infty.
\end{equation*}
 For a function $f$ in $\mathcal{S}(K)$, there exists a unique convex function, denoted by $\mathfrak{C}f$, which satisfies 

\begin{eqnarray*}
\lambda_d(\nabla \mathfrak{C}f)^{-1}&=&\lambda_d (\nabla f)^{-1},\\
\mathfrak{C}f(z_{0})&=&f(z_{0}).
\end{eqnarray*}
It is called \emph{convex rearrangement} of $f$.
\end{D}

The convex rearrangement can also be defined as $\mathfrak{C}f=\mathbf{C}_{\mu_{\nabla f}}+f(z_{0})$. Given a vector-valued function $S$ on $K$, since $\mathfrak{M}S$ is the gradient of a convex function, its restriction to each segment $[z,\zeta]\subset K$ is non-decreasing, whence it satisfies (\ref{eq:MonFct}). In this regard, Theorem \ref{ThmBre} provides with $\mathfrak{M}S$ a unique solution to the optimal transport problem with transport plan $S$. Note that \cite{Bre} also gives the existence of a measure-preserving transformation $\sigma$ of $[0,1]$ such that $\mathfrak{M}S \circ \sigma = S$, provided $\mu_{S}$ is absolutely continuous with respect to Lebesgue measure, which justifies the ``rearrangement'' terminology.\\

In dimension $1$, convex rearrangement was already defined in the literature. The class $\mathcal{S}(K)$ is exactly that of absolutely continuous functions if $K$ is a compact interval of $\mathbb{R}$. Hence, it is a generalization  of absolutely continuous functions upon which we extend operator $\mathfrak{C}$.
Note that, although it is called ``convex rearrangement'', function $\mathfrak{C}f$ is not  a rearrangement of $f$ in the sense of (\ref{eq:RrgMsr}). For instance, $f$ and $\mathfrak{C}f$ do not in general yield the same maximum. Nevertheless, visually  it corresponds in some way to piling up the increments of $f$ in another order.

 \subsection{Consistency of the rearrangement operators}
In this article we deal with irregular random fields, for which we cannot a priori obtain a convex rearrangement due to the absence of gradient. In consequence, by analogy with the $1$-dimensional case, we instead investigate asymptotically  the convex rearrangement of their regularizations. Call  \emph{ asymptotic convex rearrangement} of $f$ any convex function that is the limit of renormalized convex rearrangements of regularizations of $g$. Theorem \ref{ThmCty} allows us to obtain an asymptotic convex rearrangement of a function by studying asymptotically the gradients distributions.

 In the sequel, $K$ is a convex body of $\mathbb{R}^d$, with an arbitrary starting point $z_{0}\in K$. The following theorem will be our main tool for rearranging random fields. For a compact set $L$ and a real-valued function $f$ on $L$, 
\begin{equation*}
\|f\|^L_{\infty}=\sup_{x\in L}|f(x)|,
\end{equation*} and for a vector-valued function $g$ on $L$,  
\begin{equation*}
\|g\|_{L^1}^L=\int_{L}\|g(z)\|_{1}dz.
\end{equation*}
  
\begin{T}~\\
\label{ThmCty}
Take $\{f_{n};~n\geq 1\}$ and $f$ in $\mathcal{S}(K)$, and define $g_{n}=\nabla f_n,  ~g=\nabla f$. Then the three following statements are equivalent:
\begin{eqnarray}
\label{eq:CvgMsr}
 \mu_{g_{n}}& \Rightarrow& \mu_{  g},\\
 \label{EquCty1}
\|\mathfrak{M}g_{n}- \mathfrak{M}g \|_{L^1}^L & \to & 0,\;\text{for all compact  $L$ of ${\rm int}(K)$},\\
\label{EquCty2}
\|( \mathfrak{C}f_{n}-f_{n}(z_{0}))-(\mathfrak{C}f-f(z_{0}))\|_{\infty}^L & \to & 0,\;\text{for all compact  $L$ of {\rm int}(K)}.
\end{eqnarray}
\end{T}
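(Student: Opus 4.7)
The strategy is to recast the three statements in terms of the Brenier potentials of Theorem \ref{ThmBre} and then prove the cycle $(\ref{eq:CvgMsr}) \Rightarrow (\ref{EquCty2}) \Rightarrow (\ref{EquCty1}) \Rightarrow (\ref{eq:CvgMsr})$. Writing $\mathbf{C}_n := \mathbf{C}_{\mu_{g_n}}$ and $\mathbf{C} := \mathbf{C}_{\mu_g}$, both are convex on $K$ with $\mathbf{C}_n(z_0) = \mathbf{C}(z_0) = 0$; by the definitions of $\mathfrak{M}$ and $\mathfrak{C}$, $\mathfrak{M}g_n = \nabla \mathbf{C}_n$ and $\mathfrak{C}f_n - f_n(z_0) = \mathbf{C}_n$, and similarly without subscripts. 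Thus (\ref{EquCty1}) becomes $\nabla \mathbf{C}_n \to \nabla \mathbf{C}$ a.e.\ on $\mathrm{int}(K)$ and (\ref{EquCty2}) becomes $\mathbf{C}_n \to \mathbf{C}$ pointwise on $\mathrm{int}(K)$.

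Two of the three implications are standard. For $(\ref{EquCty2}) \Rightarrow (\ref{EquCty1})$, I would invoke the classical fact that pointwise convergence of convex functions on an open convex set is automatically locally uniform and that the gradients then converge at every differentiability point of the limit, which suffices almost everywhere. For $(\ref{EquCty1}) \Rightarrow (\ref{eq:CvgMsr})$, the equality $\mu_{g_n} = \mu_{\mathfrak{M}g_n}$ supplied by Brenier's theorem together with bounded convergence against each $\phi \in C_b(\mathbb{R}^d)$ gives $\int \phi\, d\mu_{g_n} = \int_K \phi \circ \mathfrak{M}g_n\, d\lambda^d \to \int_K \phi \circ \mathfrak{M}g\, d\lambda^d = \int \phi\, d\mu_g$.

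The principal step is $(\ref{eq:CvgMsr}) \Rightarrow (\ref{EquCty2})$, a stability result for Brenier potentials. The plan is a compactness argument: show first that $\{\mathbf{C}_n\}$ is locally uniformly bounded on $\mathrm{int}(K)$. Convexity then yields local equi-Lipschitz continuity and, via Arzel\`a-Ascoli, every subsequence admits a further subsequence $\{\mathbf{C}_{n_k}\}$ converging locally uniformly to some convex $\tilde{\mathbf{C}}$ with $\tilde{\mathbf{C}}(z_0) = 0$. The convex-analysis fact invoked above gives $\nabla \mathbf{C}_{n_k} \to \nabla \tilde{\mathbf{C}}$ a.e.; pushing Lebesgue measure forward and applying bounded convergence once more, $\mu_{g_{n_k}} = \lambda^d \circ (\nabla \mathbf{C}_{n_k})^{-1} \Rightarrow \lambda^d \circ (\nabla \tilde{\mathbf{C}})^{-1}$. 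Hypothesis $(\ref{eq:CvgMsr})$ forces this limit to equal $\mu_g$, and uniqueness in Theorem \ref{ThmBre} gives $\tilde{\mathbf{C}} = \mathbf{C}$. Since every subsequence has a sub-subsequence tending to the common limit $\mathbf{C}$, the whole sequence converges, proving $(\ref{EquCty2})$.

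The main obstacle is the local equi-boundedness claim. Weak convergence of $\mu_n$ alone does not control the first moments $\int \|x\|\, d\mu_n$ uniformly, so one cannot directly dominate $\mathbf{C}_n$ by the $L^1$-norm of its gradient. Instead I would couple tightness of $\{\mu_n\}$ (which follows from their weak convergence) with convexity, via inequalities of the form $\mathbf{C}_n(z) \leq \langle \nabla \mathbf{C}_n(z), z - z_0 \rangle$ restricted to sets where $\nabla \mathbf{C}_n$ stays in a fixed compact of $\mathbb{R}^d$, and then propagate the resulting bound on a small neighbourhood of $z_0$ to all of $\mathrm{int}(K)$ by convexity again. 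This delicate interplay between tightness, convexity and the fixed source measure $\lambda^d$ is the technical heart of the argument.
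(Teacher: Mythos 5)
Your proposal is correct and follows essentially the same route as the paper: the two easy implications are handled the same way, and for the hard direction you use the identical mechanism of tightness plus monotonicity to get local gradient bounds on the potentials, Arzel\`a--Ascoli compactness, and identification of every subsequential limit through the uniqueness in Theorem \ref{ThmBre}. The local equi-boundedness step you defer as the ``technical heart'' is precisely the content of the paper's Lemma \ref{LmTopMon}, which realises your suggested interplay of tightness and convexity via a cone argument: if $\|\nabla \mathbf{C}_n(z)\|$ were large at a point far from $\partial K$, monotonicity would force the gradient to be comparably large on a cone of fixed positive volume, contradicting the uniform tail bound on $\mu_{g_n}$.
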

The proof is at Section \ref{PrfThmCty}. The following lemma gives conditions for the weak convergence of the random measures $\mu_{n}$ to a measure $\mu$.
 
 Following \cite{B}, call \emph{convergence-determining class} $\mathcal{C}$ a class of Borel sets such that the weak convergence of measures follows from the pointwise convergence on $\mathcal{C}$. Theorem 2.2 p.15 in \cite{B}  implies that there is a countable such class in $\mathbb{R}^d$.

\begin{Lm}
\label{lm:cvg-random-measures}
Let $\{\mu_{n};\,n \geq 1\}$ be a sequence of random probability measures with characteristic functions $\{\varphi_{n}\,n\geq1\}$. Let $\mu$ be a probability measure on $\mathbb{R}^d$ with characteristic function $\varphi$, assume that one of following holds

\begin{itemize}
\item[$\mathbf{(i)}$] for almost all $h$ of $\mathbb{R}^d$, $ \varphi_{n}(h) \to \varphi(h)$ a.s.,
\item[$\mathbf{(ii)}$] for every $\mu$-continuity Borel set $B$ from a countable convergence-determining class, $\mu_{n}(B)\to \mu(B)$  a.s.,
\end{itemize}
then  $\mu_{n} \Rightarrow \mu$ with probability one.
\end{Lm}

\begin{proof}
$\mathbf{(i)}$: We have
\begin{eqnarray*}
\int_{\mathbb{R}^d} \int_{\Omega}(1- \mathbf{1}_{\{\varphi_{n}(h,\omega) \to \varphi(h,\omega)\}})\mathbb{P}(d \omega)dh=0.
\end{eqnarray*}
Due to Fubini's theorem, with probability one, for almost all $h$ of $\mathbb{R}^d$,
\begin{eqnarray*}
\varphi_{n}(h) \to \varphi(h),
\end{eqnarray*}
and it is well known that it implies the weak convergence of the corresponding probability measures.

$\mathbf{(ii)}$:  Since the class is countable, the pointwise convergences $\mu_{n}(B)\to \mu(B)$ hold simultaneously with probability $1$, and since the class is convergence-determining, it yields the a.s. convergence $\mu_{n}\Rightarrow \mu$.
\end{proof}

\section{Asymptotic rearrangement of random fields}
\label{SecRdmFld}

In this section, we consider a random field $X$ defined on $K_d=[0,1]^d$ and give general results about its asymptotic rearrangement. Then we give the main theorem of convergence in the case of Gaussian fields, in the framework of polygonal approximation. This generalizes the asymptotic convex rearrangement of the Brownian motion derived in Section \ref {sec:Int}.

\subsection{General results}
The notation $\{Y_{n}~;~n\geq 1\}$ stands here for a sequence of smooth vector valued random  fields, and $\{\mu_{n}=\mu_{Y_{n}}~;~n\geq 1\}$ are their distributions. In this section a general result concerning the asymptotics of  $\{\mu_n;\,{n \geq 1}\}$ is given. The objective is to obtain a deterministic limit  measure $\mu$ of the $\mu_n$ , and use the consistency Theorem \ref{ThmCty}. The primary condition for the convergence of $\mu_{n}$ is the convergence of the expectation
\begin{eqnarray}
\label{EquDstMun}
\mathbb{E}(\mu_n(B))=\mathbb{E}\left(\int_{K_d} \mathbf{1}_{Y_{n}(z)\in B}d z\right)=\int_{K_d} \mathbb{P}(Y_n(z) \in B) \textrm{d} z\to \mu(B)
\end{eqnarray}
for some measure $\mu$ and every $\mu$-continuity Borel set $B$.  As a first example, the following proposition gives a sufficient condition on the conjoint laws of the variables $(Y_n(z))_{z \in K_d }$ for the convergence of $\mu_n$.

\begin{T}
\label{ThmCvgGal}Assume that for all $\mu$-continuity Borel sets $B$ in a \emph{convergence-determining} class of $\mathcal{B}_{d}$ (see. \cite{B}, p.15),
\begin{eqnarray}
\label{HypCvgGal}
\int_{(K_d)^2}\sum_{n\geq 1} {\rm{cov}}\left
(\mathbf{1}_{\left\{Y_{n}(z)\in B\right\}},\mathbf{1}_{\left\{Y_{n}(\zeta)\in B\right\}}\right)d z d \zeta < \infty,
\end{eqnarray}
then $\mu_{n}\Rightarrow \mu$ a.s.
\end{T}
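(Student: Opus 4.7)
The plan is to verify almost sure weak convergence of the random measures $\mu_{\sigma(n)}$ toward $\mu$ by reducing it, via the countable convergence-determining class mentioned just before the statement, to showing that $\mu_{\sigma(n)}(B) \to \mu(B)$ almost surely for each fixed $\mu$-continuity set $B$ in that class. Since the class can be chosen countable, a single null set will suffice and the desired almost sure weak convergence of measures will follow.

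Fix such a set $B$, and decompose
$$\mu_{\sigma(n)}(B) - \mu(B) = \bigl(\mu_{\sigma(n)}(B) - \mathbb{E}\mu_{\sigma(n)}(B)\bigr) + \bigl(\mathbb{E}\mu_{\sigma(n)}(B) - \mu(B)\bigr),$$
and treat the two pieces separately. For the deterministic piece, (\ref{EquDstMun}) gives $\mathbb{E}\mu_{\sigma(n)}(B) = \int_{K_d} \mathbb{P}(Y_{\sigma(n)}(z) \in B) \, dz$. Since $\mu(\partial B) = 0$ and $\mu(\partial B) = \int_{K_d} \mu_z(\partial B) \, dz$ by (\ref{DefMuMuz}), we have $\mu_z(\partial B) = 0$ for Lebesgue-a.e. $z$, so that $B$ is a $\mu_z$-continuity set for almost every $z$. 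Assumption (\ref{ConMunzCvg}) then yields $\mathbb{P}(Y_{\sigma(n)}(z) \in B) \to \mu_z(B)$ pointwise a.e., and dominated convergence (integrands bounded by $1$ on the bounded domain $K_d$) gives $\mathbb{E}\mu_{\sigma(n)}(B) \to \int_{K_d} \mu_z(B) \, dz = \mu(B)$.

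For the random piece, I would compute
$$\mathrm{Var}\bigl(\mu_{\sigma(n)}(B)\bigr) = \int_{K_d^2} \mathrm{cov}\bigl(\mathbf{1}_{\{Y_{\sigma(n)}(z)\in B\}}, \mathbf{1}_{\{Y_{\sigma(n)}(\zeta)\in B\}}\bigr) \, dz\,d\zeta$$
by Fubini applied to the centered product expectation. Summing over $n$ and exchanging sum and integral using hypothesis (\ref{HypCvgGal}) gives $\sum_{n} \mathrm{Var}(\mu_{\sigma(n)}(B)) < \infty$. Chebyshev's inequality then yields $\sum_{n} \mathbb{P}\bigl(|\mu_{\sigma(n)}(B) - \mathbb{E}\mu_{\sigma(n)}(B)| > \varepsilon\bigr) < \infty$ for every $\varepsilon > 0$, so by Borel-Cantelli $\mu_{\sigma(n)}(B) - \mathbb{E}\mu_{\sigma(n)}(B) \to 0$ almost surely. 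Combining this with the previous paragraph settles the argument for a single $B$, and the countable choice of convergence-determining class completes the proof.

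The main technical obstacle is the interchange of sum and integral in the variance step: the covariances in (\ref{HypCvgGal}) need not be signed, so Tonelli does not apply directly. One must either read the hypothesis as absolute integrability of the sum, or work in settings (such as the Gaussian/positively-associated case treated in the next subsection) where these covariances are automatically nonnegative and Tonelli applies without further conditions. Once this interchange is justified, the remaining steps are a routine Chebyshev-Borel-Cantelli deduction together with the countable convergence-determining-class reduction recalled at the end of the preceding subsection.
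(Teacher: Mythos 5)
Your proof is correct and follows essentially the same route as the paper's: the identity $\mathrm{Var}(\mu_{\sigma(n)}(B))=\int_{(K_d)^2}{\rm{cov}}\left(\mathbf{1}_{\{Y_{\sigma(n)}(z)\in B\}},\mathbf{1}_{\{Y_{\sigma(n)}(\zeta)\in B\}}\right)dz\,d\zeta$ via Fubini, then Chebyshev and Borel--Cantelli for each fixed $B$, then the countable convergence-determining class. You are in fact somewhat more careful than the paper, which neither spells out the dominated-convergence argument showing $\mathbb{E}\mu_{\sigma(n)}(B)\to\mu(B)$ nor addresses the sum/integral interchange needed to deduce $\sum_n\mathrm{Var}(\mu_{\sigma(n)}(B))<\infty$ from (\ref{HypCvgGal}); your remark that the hypothesis should be read as absolute summability (or applied where the covariances are nonnegative) correctly identifies a genuine imprecision that the paper glosses over.
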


\begin{proof}
 For $B$ a $\mu$-continuity Borel set in the convergence determining class,
\begin{eqnarray*}
& &\mathbb{E} \left(| \mu_{n}(B)-\mathbb{E}(\mu_{n}(B))|^2\right)=\mathbb{E}\left(\mu_{n}(B)^2\right)-\left(\mathbb{E}\mu_{n}(B)\right)^2\\
&=&\mathbb{E}\left(\int_{K_d}d z  \mathbf{1}_{Y_{n}(z) \in B}\int_{K_d} d \zeta\mathbf{1}_{Y_{n}(\zeta) \in B}\right)-\int_{K_d}d z\mathbb{P}\left(Y_{n}(z)\in B\right) \int_{K_d} d \zeta \mathbb{P}\left(Y_{n}(\zeta)\in B\right)\\
&=&\int_{K_d^2}d z d \zeta \left[\mathbb{E}\left(\mathbf{1}_{Y_{n}(z)\in B}\mathbf{1}_{Y_{n}(\zeta)\in B}\right)-
\mathbb{E}(\mathbf{1}_{Y_{n}(z)\in B})\mathbb{E}\left(\mathbf{1}_{Y_{n}(\zeta)\in B}\right)\right]\\
&=&\int_{K_d^2}d z d \zeta{\rm{cov}}
 \left(\mathbf{1}_{Y_{n}(z)\in B},\mathbf{1}_{Y_{n}(\zeta)\in B}\right).
\end{eqnarray*}

Hence, hypothesis (\ref{HypCvgGal}), along with Borel-Cantelli's lemma, ensures that with probability one, $\mu_{n}(B) \to \mu(B)$.  Lemma \ref{lm:cvg-random-measures}-$\mathbf{(ii)}$ yields the conclusion.
\end{proof}
For most of the random fields investigated in Section \ref{sec:ex}, 
the covariance
${\rm{cov}}
 \left(\mathbf{1}_{Y_{n}(z)\in B},\mathbf{1}_{Y_{n}(\zeta)\in B}\right)$ is in $O(\frac{1}{n})$ and we cannot have asymptotic rearrangement for $\mathfrak{M}Y_{n}$, but only for a subsequence such that $\frac{1}{\sigma(n)}$ is summable. We need stronger results in this case, and were able to obtain them in the framework of Gaussian fields, interpolated on a simplicial triangulation.

\subsection{Simplicial approximations on $K_d$}
\label{sec:approx}

Most of the commonly investigated random fields of the literature are irregular, and hence cannot be directly rearranged, they need to be approximated by smooth functions.  In this article, we only 
adopted the following paradigm: Given a random real field $X$, define approximations $X_{n}$ of $X$, then normalize and rearrange monotonically their gradient, which will be called $Y_{n}=\frac{1}{b_{n}}\nabla X_{n}$ for some $b_{n}>0$. 

 In this paradigm, one would like the result not to depend on the choice of the approximation $X_{n}$, as long as it converges to $X$. Unfortunately, it is in the very nature of the convex rearrangement to be sensitive to slight changes in the approximation method. Consider for instance the following deterministic example. Define $f_{n}$ as the continuous function on $[0,1]$ null in $0$, linear on each segment $[\frac{k}{n},\frac{k+1}{n}]$ for $1\leq k < n$, and with slope $\pm 1$. Then, $f_n$ uniformly converges to the (convex) null function, but $\mathfrak{C} f_n$ uniformly converges to the convex piece-wise linear function null in $0$ having slope $-1$ on $[0,\frac{1}{2}]$ and $+1$ on $[\frac{1}{2},1]$. To avoid this kind of phenomenon for asymptotic convex rearrangement,  one needs to ensure that the gradient of the approximation resembles the gradient of the original function, or its increments if there is no gradient. That is one of the reasons why we choose for $X_{n}$ the polygonal interpolations of $X$ on the vertices of a triangulation. We present  below the details of the construction.
  
 Call simplex of $\mathbb{R}^d$ the convex hull of any $(d+1)$-tuple of points with non-empty interior. Write $$S_d=\left\{(t_i)_{1 \leq i \leq d}:~ 0 \leq t_i \leq 1, \sum_{i=1}^d t_i\leq 1\right\}$$ the elementary simplex of $\mathbb{R}^d$. Given $z$ in $\mathbb{R}^d$ and  an orthonormal basis $\mathbf{u}=(\mathbf{u}_i)_{1 \leq i \leq d}$ of $\mathbb{R}^d$, define the  simplex with summit $z$, and basis $\mathbf{u}$  as
\begin{eqnarray*}
\Sigma(z,\mathbf{u})=z+\rho_{\mathbf{u}}(S_d),
\end{eqnarray*}
where $\rho_{\mathbf{u}}$ is a linear transformation of $\mathbb{R}^d$ transforming the canonical basis into ${\mathbf{u}}$. 
Any simplex $T$ can be written under such a form, and 
we refer to the ``basis of $T$'' as such a choice of $\mathbf{u}$, and denote it by $\mathbf{u}^T=(\mathbf{u}_{i}^T)_{1 \leq i \leq d}$. Remark that such a choice is not unique.

Call triangulation of $K_d$ any finite simplicial partition of $K_d$. 
For $\mathcal{T}$  such a triangulation, denote by $X^{\mathcal{T}}$ the simplicial approximation of $X$ with respect to $\mathcal{T}$, i.e.  the function which is affine above each $T$ in $\mathcal{T}$ and coincides with $X$ above the vertices of $\mathcal{T}$. We will consider in this paper exclusively approximating triangulations of a special form, described below. Denote by $\mathcal{S}_{\mathcal{T}}$ the finite set of all vectors $u$ of $\mathbb{R}^d$ for which $[z,z+u]$ is the edge of a simplex $T$ of $\mathcal{T}$, for some $z$ in $\mathbb{R}^d$, and by
\begin{equation*}
C_{\mathcal{T}}=\sup_{u\in\mathcal{S}_{\mathcal{T}}}\|u\|
\end{equation*}
the length of the longest edge in $\mathcal{T}$.

Call \emph{germ of triangulation} any finite set of simplices $\mathcal{T}$ verifying the following property. There exists a network $\Gamma$ of $\mathbb{R}^d$ such that
\begin{eqnarray}
\label{PptGrmTri}
 \{\gamma+{T};~\gamma \in \Gamma,T \in \mathcal{T}\} \textrm{  is a partition of $\mathbb{R}^d$.}
\end{eqnarray}
Any network $\Gamma$ satisfying (\ref{PptGrmTri}) is said to be \emph{admissible } for $\mathcal{T}$, and the notation $\Gamma_{\mathcal{T}}$ refers to an arbitrary choice of such a network. Then define, for $n \geq 1$,
\begin{eqnarray*}
\widetilde{\mathcal{T}}_n&=&\bigcup_{T \in \mathcal{T}, \gamma \in \Gamma_{\mathcal{T}}}\left\{\frac{1}{n}(\gamma+T) \cap K_d\right\} .
\end{eqnarray*}
Property (\ref{PptGrmTri}) ensures that $\widetilde{\mathcal{T}}_n$ is indeed a partition  of $K_d$. The problem is that a set $n^{-1}(\gamma+T) \cap K_d$ might not be a simplex if it hits the boundary of $K_{d}$. However, those problematic simplexes
 won't play any role in the asymptotic convex rearrangement because their number is negligible (it is proven later). So, arbitrarily decide of a simplicial partition of each of these simplexes. The result is a triangulation $\mathcal{T}_{n}$ that is a simplicial sub-partition of $\widetilde{\mathcal{T}}_{n}$, and differs from $\widetilde{\mathcal{T}}_{n}$ only regarding the simplices touching the boundary of $K_d$.

Given a finite set of triangles $\mathcal{T}$, denote by $X_{n}^\mathcal{T}=X^{\mathcal{T}_{n}}$ the corresponding approximation of $X$. Since $X_n^{\mathcal{T}}$ is a.e affine, denote by $\nabla X_n^{\mathcal{T}}$ its gradient, defined a.e.. In all the paper, $\{b_n;~ n \geq 1\}$ stands for a sequence of positive numbers which aims to give sense to $\lim_n \frac{1}{b_n} \mathfrak{M} \nabla X_n^{\mathcal{T}}$ (or, equivalently- see Theorem \ref{ThmCty}- to $\lim_n \frac{1}{b_n} \mathfrak{C}  X_n^{\mathcal{T}}$). The renormalized gradient is defined up to a negligible set and is denoted by $$Y_n^{\mathcal{T}}=\frac{1}{b_n} \nabla X_n^{\mathcal{T}}.$$ Using Theorem \ref{ThmCty}, to obtain the rearrangement of $Y_n^{\mathcal{T}}$, it is more convenient to work with its distribution $\mu_n^{\mathcal{T}}=\mu_{Y_n^{\mathcal{T}}}$.

\subsection{Rearrangements of centered Gaussian fields}

\label{SecGssFld}

The specific study of Gaussian fields yields more efficient tools to study the convergence. We give here the statement of the main theorem of this paper, some examples will be derived in the next section to illustrate the theory, for fractional Brownian fields and Brownian sheet.  The generalized Lorenz curve plays a great role in the convex rearrangement of  Gaussian processes, so we introduce it now.

\begin{D}Call $\gamma_{d}$ the $d$-dimensional standard normal distribution. The $d$-dimensional generalized Lorenz curve is 

\begin{eqnarray*}
~GL_{d}=\mathbf{C}_{\gamma_{d}}.
\end{eqnarray*}
\end{D}
In other words, it is the asymptotic convex rearrangement of any field which renormalized gradient measure converges to $\gamma_{d}$. It corresponds in dimension $1$ to the classical Lorenz curve, frequently used in the fields of finance and econometrics.

Approximate a centered Gaussian field $X$ with covariance function $\sigma$ on a germ of triangulation $\mathcal{T}$ by $X_{n}^\mathcal{T}$. The gradient $Y_{n}^\mathcal{T}=b_{n}^{-1}\nabla X_{n}^{\mathcal{T}}$ has the following expression along an edge $[z,z+n^{-1}u]$ of a simplex $T$ of $\mathcal{T}_{n}$,
\begin{equation*}
\langle Y_{n}(z),u\rangle=(n/b_{n})(X(z+n^{-1}u)-X(z)),
\end{equation*}
whence the covariance structure of the gradient field relies on $\mathbb{E}\langle Y_{n}(z),u \rangle \langle Y_{n}(\zeta),v \rangle$ for $[z,z+n^{-1}u]$ and $[\zeta,\zeta+n^{-1}v]$ edges of simplices of $\mathcal{T}_{n}$. An easy computation yields
\begin{equation}
\label{eq: cov-gradient}
\mathbb{E}\langle Y_{n}(z),u \rangle \langle Y_{n}(\zeta),v \rangle=(n/b_{n})^2\sigma^{(2)}_{z,\zeta}(n^{-1}u,n^{-1}v)
\end{equation}
where
\begin{equation*}
\sigma^{(2)}_{z,\zeta}(u,v)=\sigma(z+u,\zeta+v)-\sigma(z+u,\zeta)-\sigma(z,\zeta+v)+\sigma(s,\zeta)
\end{equation*}
is the \emph{local second order increment} of $\sigma$. The following theorem gives a condition for the convergence of $\mathbb{E}\varphi_{n}^\mathcal{T}(h)$, where $h \in \mathbb{R}^d$ and $\varphi_{n}^\mathcal{T}$ is the characteristic function of the image measure $\mu_{n}^\mathcal{T}=\lambda_{d}(Y_{n}^\mathcal{T})^{-1}$.

\begin{T}
\label{thr:cvg-expectation-phi}
Assume that there is a function $\sigma^{\text{diag}}_{z}(u,v),z\in K_{d},\,u,v\in \mathbb{R}_{d}$, continuous in $z$, such that for all $u,v$
\begin{equation}
\label{eq:cvg-sigma-diag}
(n/b_{n})^2\sigma^{(2)}_{z,z}(n^{-1}u,n^{-1}v)\to \sigma^{\text{diag}}_{z}(u,v)
\end{equation}
uniformly in the $z$ where it is defined. For a basis $\mathbf{u}$ and $z\in K_{d}$, denote by $\mu^{z,\mathbf{u}}$ the Gaussian  probability measure on $\mathbb{R}_{d}$ with covariance matrix $(\sigma_{z}^{\text{diag}}(\mathbf{u}_{i}^T,\mathbf{u}_{j}^T))_{ij}$ in basis $\mathbf{u}$, and let $\varphi^{z,\mathbf{u}}$ be its characteristic function. Then $\mathbb{E}\varphi_{n}^\mathcal{T}(h)\to \varphi^\mathcal{T}(h)$, with 
\begin{equation}
\label{eq:limit-expectation-phi}
\varphi^\mathcal{T}(h)=\sum_{T\in \mathcal{T}}\kappa_{T}\int_{K_{d}}\varphi^{z,\mathbf{u}^T}(h)dz,
\end{equation}
where 
\begin{equation*}
\kappa_{T}=\frac{\vol(T)}{\sum_{T\in\mathcal{T}}\vol(T)}.
\end{equation*}
It means that $\varphi^\mathcal{T}$ is the characteristic function of the mixtures of the  $\mu^{z,\mathbf{u}^T}$, $T\in \mathcal{T}$, $z\in K_{d}$.
\end{T}

\begin{proof}
For $T$ in $\mathcal{T}$, denote by 
\begin{equation*}
\mathcal{T}_{n}^T=\{n^{-1}(\gamma+T)\in \mathcal{T}_{n} \},
\end{equation*}
 all the simplices of $\mathcal{T}_{n}$ obtained by translation and rescaling of $T$. 
We have, for $h$ in $\mathbb{R}^d$,
\begin{align*}
\varphi_{n}^\mathcal{T}(h)&=\int_{K_{d}}\exp(\imath \langle Y_{n}(z), h\rangle)=\sum_{S\in \mathcal{T}_{n}}\int_{S}\exp(\imath \langle Y_{n}(z), h \rangle) dz\\
& =\sum_{S\in \mathcal{T}_{n}} \vol(S)\exp(\imath \langle Y_{n}(S), h \rangle)
\end{align*}
where $Y_{n}(S)$ stands for the common value of $Y_{n}$ over $S$. 

Let $T=\Sigma(z_{T},\mathbf{u}^T)$ be a simplex of $\mathcal{T}$. If we put
\begin{equation*}
\varphi_{n}^T(h)=\sum_{S\in \mathcal{T}_{n}^\mathcal{T}}\vol(S)\exp(\imath \langle Y_{n}(S),h\rangle),
\end{equation*}
we have $\varphi_{n}=\sum_{T\in \mathcal{T}}\varphi_{n}^T+c_{n}$, where $c_{n}$ is the integral over the area where simplices of $\mathcal{T}_{n}$ touches the border. It is clear that $-C_{\mathcal{T}}/n \leq c_{n}\leq C_{\mathcal{T}}/n$, whence $c_{n}\to 0$ a.s..
For  $S=n^{-1}(\gamma+T)=\Sigma(z_{S},n^{-1}\mathbf{u}^T)$  a simplex of $\mathcal{T}_{n}$ we have, by (\ref{eq: cov-gradient}),
\begin{equation*}
\mathbb{E}\langle Y_{n}({S}), \mathbf{u}_{i}^T\rangle\langle Y_{n}(S),\mathbf{u}_{j}^T\rangle=(n/b_{n})^2\sigma^{(2)}_{z_{S},z_{S}}(n^{-1}\mathbf{u}_{i}^T,n^{-1}\mathbf{u}_{j}^T).
\end{equation*}
For $z$ in $K_{d}$, denote by $z_{n}^T$ the closest point such that $S=\Sigma(z_{n}^T,n^{-1}\mathbf{u}^T)$ is a simplex of $\mathcal{T}_{n}^T$, and let $\varphi_{n}^{z}(h)$ be the characteristic function of $Y_{n}(z_{n}^T)$.
Using hypothesis (\ref{eq:cvg-sigma-diag}), the expectation of this function converges pointwise (in $z$) to $\varphi^{z,\mathbf{u}^T}(h)$
and is bounded by $1$. Thus, by denoting $\bar S=\{z\in K_{d}:\, z_{n}^T \in S\}$ for $S\in \mathcal{T}_{n}^T$, we have
\begin{equation*}
\mathbb{E}\sum_{S\in \mathcal{T}_{n}^T}\vol(\bar S)\varphi_{n}^{z_S}(h)=\mathbb{E}\int_{K_{d}}\varphi_{n}^z(h)dz\to \int_{K_{d}}\varphi^{z,\mathbf{u}^T}(h)dz.
\end{equation*} 
Thus
\begin{equation*}
\mathbb{E}\varphi_{n}^T(h)=\mathbb{E}\sum_{S\in \mathcal{T}_{n}^T}\vol(S)\varphi_{n}^{z_S}(h)\to \kappa_{T} \int_{K_{d}}\varphi^{z,\mathbf{u}^T}dz.
\end{equation*}

Summing over $T\in\mathcal{T}$ gives the result.

\end{proof}

Thus the candidate for the limit, given by (\ref{eq:limit-expectation-phi}), is known, provided (\ref{eq:cvg-sigma-diag})  is satisfied.
We state now the main theorem of this paper, which gives a more efficient condition for the weak  convergence of $\mu_n^\mathcal{T}$ than Theorem \ref{ThmCvgGal}. 
\begin{T}
\label{CorCvgGss}
Keeping the previous notation, we have
\begin{equation}
\label{eq:bound-4thmoment-phi}
\mathbb{E}|\varphi_{n}^\mathcal{T}(h)-\mathbb{E}\varphi_{n}^\mathcal{T}(h)|^4\leq C \left((n/b_{n})^2\sum_{S,S'\in\mathcal{T}_{n}}\vol(S)\vol(S')|\sigma^{(2)}_{z,\zeta}(n^{-1}u,n^{-1}v)|\right)^2\end{equation}
for some constant $C>0$, where $[z,z+n^{-1}u]$ and $[\zeta,\zeta+n^{-1}v]$ are edges of $S$ and $S'$, respectively.
\end{T}

The proof is at section \ref{PrfThmCvgMunGss}.
In all our examples, we have the summability of the right hand term, which gives us the a.s. weak convergence of $\mu_{n}^{\mathcal{T}}$.

\section{Examples}
\label{sec:ex}

 \subsection{Fractional Brownian field}
 The fractional Brownian field is a celebrated model that includes many other famous random fields and processes, such as the fractional Brownian motion or the L\'evy field. For $\alpha \in (0,2)$,  the Fractional Brownian field is the unique centered Gaussian field $X^{\alpha}$ which covariance function is, up to a constant, 
\begin{equation*}
\sigma(z,\zeta)=(\|z\|^\alpha+\|\zeta\|^\alpha-\|z-\zeta\|^\alpha).
\end{equation*}

\begin{T}
\label{thr:cvg-fractional}
Let $\mathcal{T}$ be a germ of triangulation, and define
\begin{align*}
b_{n}&=n^{1-\alpha/2}, \\
Y_{n}^{\alpha,\mathcal{T}}&=b_{n}^{-1}\nabla X_{n}^{\alpha,\mathcal{T}},\\
\mu_{n}^{\alpha,\mathcal{T}}&=\lambda_{d}(Y_{n}^{\alpha,\mathcal{T}})^{-1}.
\end{align*}
We have the convergence
\begin{equation*}
\mu_{n}^{\alpha,\mathcal{T}}\Rightarrow \mu^{\alpha,\mathcal{T}}=\sum_{T\in\mathcal{T}}\kappa_{T}\mu^{\alpha,\mathbf{u}^T}\,a.s.,
\end{equation*}
where $\mu^{\alpha,\mathbf{u}}$ is a Gaussian probability measure with covariance matrix
\begin{equation*}
\Lambda^{\alpha,\mathbf{u}}_{ij}=\|\mathbf{u}_{i}^T\|^\alpha+\|\mathbf{u}_{j}^T\|^\alpha-\|\mathbf{u}_{i}^T-\mathbf{u}_{j}^T\|^\alpha
\end{equation*}
in basis $\mathbf{u}$.
We have also 
\begin{eqnarray*}
\frac{1}{b_{n}}\mathfrak{M}\nabla X_{n}^{\alpha,\mathcal{T}}\to \mathbf{M}_{\mu^{\alpha,\mathcal{T}}},\;\frac{1}{b_{n}} \mathfrak{C}X_{n}^{\alpha,\mathcal{T}}\to\mathbf{C}_{\mu^{\alpha,\mathcal{T}}},
\end{eqnarray*}  
in the sense of Theorem \ref{ThmCty}.
\end{T}

\begin{proof}
Since $\alpha$ is fixed, we omit int the proof exponent $\alpha$ for the sake of clarity. We have for $z$ in $K_{d}$, $u,v\in \mathbb{R}^d$,
\begin{equation*}
\sigma^{(2)}_{z,z}(u,v)=\|u\|^\alpha+\|v\|^\alpha-\|u-v\|^\alpha 
\end{equation*}
whence (\ref{eq:cvg-sigma-diag}) is satisfied with $\sigma^{\text{diag}}_{z}(u,v)=\sigma^{(2)}_{z,z}(u,v)$. It follows from Theorem \ref{thr:cvg-expectation-phi} that 
\begin{equation*}
\mathbb{E}\varphi_{n}^\mathcal{T}(h)\to  \sum_{T\in \mathcal{T}}\kappa_{T}\varphi^{\mathbf{u}^T}
\end{equation*}
where $\varphi^{\mathbf{u}^T}$ has covariance matrix $\Lambda^{\mathbf{u}^T}$.

Thus, for any germ of triangulation $\mathcal{T}$, we have by (\ref {eq:bound-4thmoment-phi}), with $n/b_{n}=n^{\alpha/2}$,
\begin{align}
\label{eq:4thmoment-fractional}
\mathbb{E}&|\varphi_{n}^{\mathcal{T}}(h)-\mathbb{E}\varphi_{n}^{\mathcal{T}}(h)|^4 = O\left(n^\alpha\sum_{(S,S')\in\mathcal{T}_{n}^2}\vol(S)\vol(S')|\sigma^{(2)}_{z,\zeta}(n^{-1}u,n^{-1}v)|\right)^2
\end{align}
where $[z,z+n^{-1}u]$ and  $[\zeta,\zeta+n^{-1}v]$ are edges of resp. $S$ and $S'$, hence satisfy $\|u\|,\|v\|\leq C_{\mathcal{T}}$. 
 We put $t_{n}= (C_{\mathcal{T}}+1) n^{-1}$. We distinguish the set $\Sigma_{n}$ of pairs $(S,S')$ of $\mathcal{T}_{n}^2$ that are at distance more than $t_{n}$ from $\Theta$ in $K_{d}^2$, and the other ones, which contribution is, since $\sigma$ is $\alpha$-Holder, in
\begin{equation*}
n^{\alpha}\sum_{(S,S')\in \mathcal{T}_{n}^2\setminus \Sigma_{n}}\vol(S)\vol(S')t_{n}^\alpha \leq n^{\alpha}\vol(\Theta+B(0,t_{n}))t_{n}^\alpha=O(n^\alpha n^{-1-\alpha}),
\end{equation*}
 whence this term is square summable.

 In view of using (\ref{eq:4thmoment-fractional}), for $\|u\|,\|v\|\leq C_{\mathcal{T}}$, we have
\begin{align}
\label{eq:bound-sigma2-fractional}
\nonumber\sigma^{(2)}_{z,\zeta}&(n^{-1}u,n^{-1}v)\\
=&\|z-\zeta\|^\alpha+\|z-\zeta+n^{-1}(u-v)\|^\alpha-\|z-\zeta+n^{-1}u\|^\alpha-\|z-\zeta-n^{-1}v\|^\alpha\\
=&n^{-2}O(\|z-\zeta\|^{\alpha-2}).
\end{align}
Thus we have, 
 \begin{align}
 \label{eq:2d-term-fractional}
n^\alpha\sum_{(S,S')\in \Sigma_{n}}\vol(S)\vol(S')&\sigma^{(2)}_{z,\zeta}(n^{-1}u,n^{-1}v)\\
=&O(n^{\alpha-2}\sum_{(S,S') \in \Sigma_{n}}\vol(S)\vol(S')\|z-\zeta\|^{\alpha-2}).
\end{align}
Remark that at fixed $z$, the sum $\sum_{S': (z,\zeta)\in\Sigma_{n}}\|z-\zeta\|^{\alpha-2}$ is smaller than $\sum_{S': t_{n}\leq \|\zeta\| \leq 1}\|\zeta\|^{\alpha-2}$, where the sum is over all $S'$ that are of the form $n^{-1}(\gamma+T)$ for $T$ in $\mathcal{T}$ and $\gamma$ in $\Gamma_{\mathcal{T}}$ (and not only those of $\mathcal{T}_{n}^T$ that intersect $K_{d}$), but with summit $\zeta$ that has norm in $[t_{n},1]$.
The function defined on $\{\zeta \in \mathbb{R}^d: t_{n}\leq \|\zeta\| \leq 1\}$ by
\begin{equation*}
\beta(z)=\|\zeta\|^{\alpha-2}\text{ for $z$ belonging to $S'$ (which has summit $\zeta$)}
\end{equation*} is smaller than 
\begin{equation*}
\bar\beta(z)=(\|z\|-C_{\mathcal{T}}/n)^{\alpha-2}
\end{equation*}
because $\zeta$ has norm larger than $\|z\|+C_{\mathcal{T}}/n$, given that $z,\zeta\in S'$ and $S'$ has diameter smaller than $C_{\mathcal{T}}/n$, and $\alpha-2\leq 0$. Whence
\begin{align*}
\sum_{S':t_{n}\leq\|\zeta\| \leq 1}\vol(S')\|\zeta\|^{\alpha-2}&\leq  \int_{t_{n}-C_{\mathcal{T}}/n \leq \|\zeta\|\leq 1}\|z\|^{\alpha-2}dz\\
=&O\left(\int_{t_{n}-C_{\mathcal{T}}/n\leq r \leq 1}r^{\alpha-2}r^{d-1}dr\right)\\
 =&O({n}^{-\alpha-d+2}).
\end{align*}
Finally the term (\ref{eq:2d-term-fractional}) is in $n^{\alpha-2}n^{-\alpha-d+2}$, whence it is square summable for $d\geq 1$, and the  sum in (\ref{eq:4thmoment-fractional}) is finite. Thus by Borel Cantelli's lemma $\varphi_{n}^\mathcal{T}(h)-\mathbb{E}\varphi_{n}^\mathcal{T}(h)\to 0$ a.s., whence Lemma \ref{lm:cvg-random-measures}-$\mathbf{(i)}$ brings the conclusion.
\end{proof}

Theorem  \ref{thr:cvg-fractional} retrieves the convergence of the $1$-dimensional fractional Brownian motion $X^\alpha$ interpolated on $\{k/n;\,k=0,1,2,\dots,n\}$,
\begin{equation*}
n^{\alpha/2-1}\RgC X_{n}^\alpha(z)) \to GL_{1}(z),\,z\in(0,1).
\end{equation*}
This result was already present in \cite{DT}, who furthermore obtained uniform convergence on $[0,1]$.

The asymptotic rearrangement is consistent under the action of rotations: Indeed, if $\mu^{\mathcal{T}}$ is the limit measure with germ of triangulation $\mathcal{T}$, we have for all rotation $\rho$ and germ of triangulation $\mathcal{T}$, $\mu^{\rho(\mathcal{T})}=\mu^\mathcal{T}\rho^{-1}(\cdot)$. This is due to the isotropy of the field, and will not be the case in the subsequent examples.

\subsection{Brownian sheet}

This section is devoted to the study of the Brownian sheet, another irregular centered Gaussian field. 
For $z$ and $\zeta$ two elements of $\mathbb{R}^d$, denote by $z\wedge \zeta$ the vector whose  coordinates are the pointwise minimum coordinates of $z$ and $\zeta$, and $\underline{z}$ is the product of coordinates of $z$.
 The Brownian sheet  is defined on $(\mathbb{R}_{+})^d$ as the Gaussian field with covariance function $\sigma(z,\zeta)=\underline{z \wedge \zeta}$.  Here, we use the notation of Section \ref{sec:approx}, where $X$ is a Brownian sheet.
\begin{T}
\label{ThmCvgCtv}
Let  $T=\Sigma(0,\mathbf{u})$ be a  simplex of $(\mathbb{R}_+)^d$. We define
\begin{eqnarray}
\label{EqBas}
\nonumber & & \mathbf{u}_{i,j}=\mathbf{u}_i \wedge \mathbf{u}_j-\mathbf{u}_i \wedge 0- \mathbf{u}_j \wedge 0 \in \mathbb{R}^d, ~i,j \in \{1,\dots ,d\},\\
\label{Deflz}
& &l(z)=(z_2\ldots z_d, z_1z_3 \ldots z_d , \ldots,z_1  \ldots z_{d-1}),~z \in K_d,~\\
\nonumber  & & {\Lambda}^{\mathbf{u}}(z)_{i,j}=\langle l(z), \mathbf{u}_{i,j}\rangle,~i,j \in \{1,\dots ,d\}.
\end{eqnarray}
We cal  $\varphi^{\mathbf{u}}$ the characteristic function of the Gaussian probability measure with covariance matrix $\Lambda^{\mathbf{u}}$ in basis $\mathbf{u}$. We have, for every $h$ in $\mathbb{R}^d$, and $b_{n}=\sqrt{n}$
\begin{eqnarray*}
\varphi_n^\mathcal{T} (h)\to \varphi^\mathcal{T}(h) =\sum_{T\in \mathcal{T}}\kappa_{T} \varphi^{\mathbf{u}_{T}}(h)\quad\text{a.s.},
 \end{eqnarray*}
 whence $\mu_{n}^\mathcal{T}\Rightarrow\mu^{\mathcal{T}}$, the measure whose characteristic function is $\varphi^\mathcal{T}$. We have also, in virtue of Theorem \ref{ThmCty} 
\begin{equation*}
\frac{1}{\sqrt{n}}\RgC X_{n}^\mathcal{T}\to \mathbf{C}_{\mu^\mathcal{T}}, \frac{1}{\sqrt{n}}\RgM \nabla X_{n}\to \mathbf{M}_{\mu^\mathcal{T}}\text{ almost surely}.
\end{equation*}
\end{T}

\begin{proof}

We  use Theorem \ref{thr:cvg-expectation-phi} to compute the only possible limit and Theorem \ref{CorCvgGss} to show the almost sure convergence.
Let $z\in K_{d}$ and $u,v\in\mathbb{R}^d$. We have
\begin{align*}
(n/b_{n})^2&\sigma^{(2)}_{z,z}(n^{-1}u,n^{-1}v)\\
&=n\left(\un{(z+\frac{1}{n}u)\wedge (z+\frac{1}{n}v})-\un{(z+\frac{1}{n}u)\wedge z}-\un{z \wedge (z+\frac{1}{n}v)}+\un{z}\right)\\
&=n\left(\un{(z+\frac{1}{n} u \wedge v)}-\un{(z+\frac{1}{n}u \wedge 0)}-\un{(z+\frac{1}{n}v \wedge 0)}+\un{z}\right).\\
\end{align*}
Consider now the function $\Pi$ on $\mathbb{R}^d$ defined by $\Pi(z)=\un{z}$. It admits, for all $z,h \in \mathbb{R}^d$, the development
\begin{eqnarray*}
\Pi(z+h)=(z_1+h_1)\dots (z_d+h_d)=\Pi(z)+\langle l(z),h\rangle+q(z,h),
\end{eqnarray*}

where
\begin{eqnarray*}
l(z)&=&(z_2\dots z_d, z_1z_3\dots z_d ,\dots  z_1\dots z_{d-1}),\\
q(z,h)&\leq &C\|h\|^2
\end{eqnarray*}
for some  constant $C$.
Hence (\ref{eq:cvg-sigma-diag}) is satisfied with
\begin{eqnarray*}
\sigma^{\text{diag}}_{z}(u,v)=\langle l(z), u\wedge v-u\wedge 0-v\wedge 0\rangle.
\end{eqnarray*}

 For $I\subseteq \{1,2,\dots,d\}$, define $\varphi_{I}(z)=\prod_{i\in I}z_{i}$. Let $I$ be the set of indices for which $z_{i}>\zeta_{i}$, and $I^c$ its complementary in $\{1,\dots,d\}$. Take $z,\zeta$ in $K_{d}$ with distinct coordinates and $u,v$ such that $\|u\|_{\infty},\|v\|_{\infty}<\inf_{i}|z_{i}-\zeta_{i}|$.
\begin{align*}
\sigma^{(2)}_{z,\zeta}(u,v)&=\sigma(z,\zeta)-\sigma(z,\zeta+v)+\sigma(z+u,\zeta+v)-\sigma(z+u,\zeta)\\
&=\varphi_{I}(z)(\varphi_{I^c}(\zeta)-\varphi_{I^c}(\zeta+v))+\varphi_{I}(z+u)(\varphi_{I^c}(\zeta+v)-\varphi_{I^c}(\zeta))\\
&=(\varphi_{I}(z)-\varphi_{I}(z+u))(\varphi_{I^c}(\zeta)-\varphi_{I^c}(\zeta+v)).
\end{align*}
Since the $\varphi_{I}$ are of class $\mathcal{C}^1$ on $K_{d}$, there is a  constant $C$ such that
\begin{equation*}
\sigma^{(2)}_{z,\zeta}(u,v)\leq C\|u\|\|v\|
\end{equation*}
whenever $\|u\|_{\infty},\|v\|_{\infty}<\inf_{i}|z_{i}-\zeta_{i}|$.

Thus we define the class $\Sigma_{n}$ of simplices $S,S'$ of $\mathcal{T}_{n}$ for which every $z\in S,\zeta\in S'$ satisfy
\begin{equation*}
|z_{i}-\zeta_{i}|>C_{\mathcal{T}}/n.
\end{equation*}
It follows that the sum (\ref{eq:bound-4thmoment-phi}) is divided in two terms, one with the sum over $\Sigma_{n}$, and the rest. The sum over $\Sigma_{n}$ is clearly in $O((n/b_{n})^2n^{-2}\vol(K_{d}))=O(n^{-1})$, hence square summable, and the rest is majorized by the volume in $K_{d}^2$ of all points $(z,\zeta)$ that satisfy $\inf_{i}|z_{i}-\zeta_{i}|\leq C_{\mathcal{T}}/n$, hence is in $O(1/n)$, and is square summable too. Thus (\ref{eq:bound-4thmoment-phi}) is summable, and by Borel Cantelli's lemma we have the result.

\end{proof}

Finding the expression of $\mathbf{C}_{\mu^\mathcal{T}}$ is not an easy task, and in general we were not  able to derive explicit formulas. We present here a tractable expression for the $2$-dimensional Brownian sheet with the germ of triangulation $\mathcal{T}_{0}=\{\Sigma(0,\mathbf{e}),\Sigma(0,-\mathbf{e})\}$.

 With the notation of Theorem \ref{ThmCvgCtv}, we have
\begin{eqnarray*}
\mathbf{e}_{1,1}&=&\mathbf{e}_{1},\\
\mathbf{e}_{1,2}&=&\mathbf{e}_{2,1}=0,\\
\mathbf{e}_{2,2}&=&\mathbf{e}_{2},\\
(-\mathbf{e})_{1,1}&=&\mathbf{e}_{1},\\
(-\mathbf{e})_{1,2}&=&(-\mathbf{e})_{2,1}=0,\\
(-\mathbf{e})_{2,2}&=&\mathbf{e}_{2}.
\end{eqnarray*}

 We are looking for the expression of the asymptotic convex rearrangement $\mathbf{C}_{\mu^{\mathcal{T}_0}}$, which gradient distribution is the measure
\begin{equation*}
\mu_{\mathcal{T}_{0}}(B)=\int_{K_{2}}\mu_{x,y}(B)dxdy, 
\end{equation*}
where, according to (\ref{EqBas}), $\mu_{x,y}$ is Gaussian with covariance matrix
\begin{equation*}
{\Lambda}^{\mathbf{e}}(x,y)=\left(\begin{array}{cc}y & 0 \\0 & x\end{array}\right).
\end{equation*}

Let $C_{a,b}=(-\infty,a]\times (-\infty,b]$ be an infinite rectangle of $\mathcal{B}_{2}, a,b \in \mathbb{R}$. We have
\begin{eqnarray}
\label{EquCarPsi}
\mu_{\mathcal{T}_{0}}(C_{a,b})=\int_{K_{2}}dxdy\int_{C_{a,b}}d h_{1}dh_{2} \frac{\exp(-\frac{1}{2}({h_{1}^2/ y+h_{2}^2/x}))}{2\pi\sqrt{xy}}&=&G(a)G(b),
\end{eqnarray}
where
\begin{eqnarray*}
G(a) = \int_{-\infty}^a  d h \int_{0}^1  \frac{\exp(-\frac{h^2 }{2x})}{\sqrt{2 \pi x}}d x ,~a \in \mathbb{R}.
\end{eqnarray*}
It is a non-decreasing bijection from $\mathbb{R}$ to $[0,1]$.
In consequence, we define $\mathbf{C}_{\mu^{\mathcal{T}_0}}$ by

\begin{eqnarray*}
\psi(x)=\int_{0}^x G^{-1}(t) \ dt,\\
\mathbf{C}_{\mu^{\mathcal{T}_0}}(x,y)=\psi(x)+\psi(y).
\end{eqnarray*}
Since $\psi$ is convex, so is $\mathbf{C}_{\mu^{\mathcal{T}_0}}$. We have

\begin{eqnarray*}
\mu_{\nabla \mathbf{C}_{\mu^{\mathcal{T}_0}}}(C_{a,b})&=&\int_{K_{2}}\mathbf{1}_{\{\nabla C_{\mu}^{\mathcal{T}_{0}}(z)\in C_{a,b}\}}dz\\
&=&\int_{K_2} \mathbf{1}_{\{\psi'(x) \leq a\}} \mathbf{1}_{\{\psi'(y) \leq b\}} d xd y=\int_{K_2} \mathbf{1}_{\{x \leq G(a)\}} \mathbf{1}_{\{y \leq G(b)\}} d xd y\\&=& G(a)G(b).
\end{eqnarray*}
$\mathbf{C}_{\mu^{\mathcal{T}_0}}$ indeed has gradient distribution  (\ref{EquCarPsi}).
This function is represented on Figure \ref{fig:brw-sheet}.

\section{Discussion}
\label{SecDsc}

In this article we developed  tools for computing the asymptotic convex rearrangements of some random fields. We observed that there was a strong dependency on the choice of the triangulation used for approximating the field. In \cite{DT}, it becomes apparent that for  some $1$-dimensional Gaussian processes, the Lorenz curve seems a ``universal'' asymptotic convex rearrangement, in the sense that it is the same for polygonal and convoluted approximations.

In the multivariate case, the anisotropy of some fields make this universality impossible.
If $\mu^{\mathcal{T}}$ is the limit measure, and $\rho$ is a rotation of $\mathbb{R}^d$, measures $\mu^{\rho(\mathcal{T})}\rho(\cdot)$ and $\mu^\mathcal{T}$ are in general different, unless the field is isotropic.
The mapping that associates to each rotation $\rho$ its action $\mu\mapsto \mu^{\rho(\tau)}\rho(\cdot)$ can alternatively serve to measure the anisotropy.

\section{Proofs}
\label{SecPrf}

\subsection{Proof of Theorem \ref{ThmCty}}
\label{PrfThmCty}
Without loss of generality, we suppose $f_{n}$ and $f$ convex. It allows us to omit $\mathfrak{M}$ and $\mathfrak{C}$ in the writing.

\textsl{(\ref{EquCty1})$\Rightarrow$ (\ref{eq:CvgMsr}):} 
 Let $\varphi$ be a bounded real continuous function which support lies in a compact $L \subseteq \inr(K)$. Since $\nabla f_{n}\to \nabla f$ for the $L_{1}$ norm, $r$

 Assume first that we have the $L^1$ convergence of $\nabla f_{n}$ to $\nabla f$ on all $K$.
The family $\{\mu_{n};\,n\geq 1\}$ is tight. Indeed, denote by $B_{1}(0,M)$ the ball of radius $M$ for the $\|\cdot\|_{1}$ norm in $\mathbb{R}^d$. Markov's inequality yields, for $M\geq 0$,
\begin{equation}
\mu_{n}(B_{1}(0,M)^c)\leq \frac{1}{M}\int_{K}\|\nabla f_{n}\|_{1}.
\end{equation}
The $L^1$ convergence of  $\nabla f_{n}$ implies that the right hand member converges to $\frac{1}{M}\int_{K}\|\nabla f\|_{1}<\infty$. From there, for all $\varepsilon>0$, there is $M\geq 0$ such that, for sufficiently large $n$, $\mu_{n}(B_{1}(0,M)^c)\leq\varepsilon$, which proves the tightness. To conclude, we need to show that the only possible limit of all convergent sub-sequence of $\{\mu_{n}\}$ is  $\mu$. Let ${\mu_{n'}}$ be a subsequence that converges to a measure $\mu'$. Since $\nabla f_{n'}$ converges for the $L^1$ norm to $\nabla f$, according to the converse of Lebesgue Theorem, there is a subsequence  $\nabla f_{n''}$ that converges to   $\nabla f$ a.e..  Thus, for every continuous function with compact support    $\varphi$ on $\mathbb{R}^d$, $\int_{K} \varphi(\nabla f_{n''})\to \int_{K} \varphi(\nabla f)$, which means $\int_{\mathbb{R}^d}\varphi(x)\mu_{n''}(dx)\to \int_{\mathbb{R}^d}\varphi(x)\mu(dx)$. Since $\mu_{n''}\Rightarrow \mu'$, it follows that $\mu'=\mu$, whence $\mu_{n}\Rightarrow \mu$.

Let us treat now the general case, where we only have the $L^1$-convergence on each compact  of $\inr(K)$. We consider a non-decreasing family of compacts $\{K_{\varepsilon};\,\varepsilon>0\}$ whose union is $\inr(K)$. The convergence holds on every $K_{\varepsilon},\varepsilon>0$. Denote, for a function $u$ on $K$, by $u^\varepsilon$ its restriction to $K_{\varepsilon}$. Put $\mu_{n}^\varepsilon$ the image of Lebesgue measure under $\nabla f_{n}^\varepsilon$, and $\mu^\varepsilon$ that of $\nabla f^\varepsilon$. From what we just proved, $\mu_{n}^\varepsilon\Rightarrow \mu^\varepsilon$ for every $\varepsilon>0$. Let now $B$ be a Borel set of $\mu$-continuity in $\mathbb{R}^d$. It remains to show that $\mu_{n}(B)\to \mu(B)$. Since $B$ is also a $\mu^\varepsilon$-continuity set ($\mu^\varepsilon \leq \mu$), we have $\mu_{n}^\varepsilon(B)\to \mu^\varepsilon(B)$. Then
\begin{equation}
|\mu_{n}(B)-\mu(B)|\leq |\mu_{n}^\varepsilon(B)-\mu^\varepsilon(B)|+\lambda_d(K_{\epsilon}^c),
\end{equation}
the result comes by letting $\varepsilon$ go to $0$. \\



\textsl{(\ref{EquCty2}) implies (\ref{EquCty1}):} We present the result under the form of a lemma, that is also useful later.

\begin{Lm}
\label{LmConMon}
Let $K$ be a compact convex set, and $\{f_{n};\,n\geq 1\}$ a sequence of convex functions that  converge pointwise to a convex continuous function $f$ on $K$. Then $\nabla f_{n}$ converges to $\nabla f$ for the $L^1$ norm on each convex compact subset of $ \mathop{\textrm{int}}(K)$.
\end{Lm}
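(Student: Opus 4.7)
The plan is to reduce $\mathcal{L}^1$ convergence on a compact convex subset $K' \subset \mathop{\textrm{int}}(K)$ to two ingredients: a uniform bound on $|\nabla f_n|$ over $K'$, and almost-everywhere convergence $\nabla f_n \to \nabla f$. Once both are in hand, dominated convergence finishes the argument. The only nontrivial step is the pointwise convergence of the gradients, which we extract from convexity and uniform convergence of the $f_n$.

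First I would upgrade the pointwise hypothesis to uniform convergence of $f_n \to f$ on every compact subset of $\mathop{\textrm{int}}(K)$; this is the classical fact that pointwise limits of convex functions on an open convex set are uniform on compacta (see Rockafellar, \emph{Convex Analysis}, Thm.~10.8). Choose then a compact convex $K''$ with $K' \subset \mathop{\textrm{int}}(K'') \subset K'' \subset \mathop{\textrm{int}}(K)$. The sequence $(f_n)$ is uniformly bounded on $K''$, say by $M$. For any $x \in K'$ and any subgradient $v \in \partial f_n(x)$, applying the subgradient inequality at points $y = x + r\, v/|v|$ with $r = d(K',\partial K'')$ yields
\[
|v| \leq \frac{2M}{r} =: L,
\]
uniformly in $n$ and in $x \in K'$. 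Hence $|\nabla f_n| \leq L$ almost everywhere on $K'$, and the same bound holds for $\nabla f$.

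Next, I would establish $\nabla f_n(x) \to \nabla f(x)$ at every point $x \in \mathop{\textrm{int}}(K')$ where $f$ is differentiable; this is a full-measure subset since convex functions are a.e.\ differentiable. Fix such an $x$ and let $g^\star$ be any cluster point of the bounded sequence $(\nabla f_n(x))_n$, with $\nabla f_{n_k}(x) \to g^\star$. For $h$ small enough that $x \pm h \in K''$, convexity gives
\[
f_{n_k}(x+h) - f_{n_k}(x) \;\geq\; \langle \nabla f_{n_k}(x), h\rangle \;\geq\; f_{n_k}(x) - f_{n_k}(x-h).
\]
Passing to the limit in $k$ using uniform convergence on $K''$ yields
\[
f(x+h) - f(x) \;\geq\; \langle g^\star, h\rangle \;\geq\; f(x) - f(x-h).
\]
Choosing $h = tv$ with $v$ fixed and $t \to 0$, and using differentiability of $f$ at $x$, both outer terms become $\langle \nabla f(x), v\rangle$, which forces $\langle g^\star, v\rangle = \langle \nabla f(x), v\rangle$ for every $v$. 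Thus $g^\star = \nabla f(x)$; the bounded sequence has a unique cluster point, so $\nabla f_n(x) \to \nabla f(x)$.

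Finally, the domination $|\nabla f_n - \nabla f| \leq 2L$ on $K'$ together with a.e.\ convergence yields
\[
\int_{K'} |\nabla f_n - \nabla f|\, d\lambda_d \;\longrightarrow\; 0
\]
by Lebesgue's theorem. The main technical point is the uniform Lipschitz estimate, which crucially uses that $K'$ is compactly contained in $\mathop{\textrm{int}}(K)$; all subsequent steps are routine consequences of convexity and bounded convergence.
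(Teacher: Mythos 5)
Your proof is correct, but it takes a genuinely different route from the paper's. You argue directly in $\mathbb{R}^d$: upgrade pointwise to locally uniform convergence, extract a uniform Lipschitz (subgradient) bound on a compact convex $K'$ sitting inside a slightly larger compact subset of $\mathop{\textrm{int}}(K)$, identify every cluster point of $(\nabla f_n(x))_n$ with $\nabla f(x)$ at each differentiability point of $f$ via the two-sided subgradient inequality, and finish with dominated convergence. This is essentially the classical result on convergence of gradient mappings of convex functions, and it treats all partial derivatives simultaneously. The paper instead proceeds by dimension reduction: it first proves the statement for equi-Lipschitz convex functions on $[0,1]$ by a Helly-type diagonal extraction of the monotone, bounded derivatives (identifying the limit as $f'$ by integration), then removes the equi-Lipschitz hypothesis on compact subintervals via difference quotients, and finally passes to $\mathbb{R}^d$ by Fubini, applying the one-dimensional result along one-dimensional sections and dominating the sectionwise $\mathcal{L}^1$ errors by $\|f_n\|_{\infty}$ using monotonicity of the derivative along each section. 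Your approach is shorter and avoids both the subsequence extraction and the coordinate-by-coordinate bookkeeping; the paper's is more elementary in that it relies only on monotonicity of one-variable derivatives rather than on subdifferential arguments. One small point worth making explicit in your write-up: $\nabla f_n(x)$ need not exist at every $x$, so the pointwise step should be carried out on the (full-measure) set of points where $f$ and all the $f_n$ are differentiable --- or, equivalently, with arbitrary selections $v_n \in \partial f_n(x)$, for which your cluster-point argument works verbatim.
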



\begin{proof}[Proof of Lemma \ref{LmConMon}]
We prove the lemma in three steps.

\textbf{Equilipschitz convex functions on $[0,1]$:}
For $\kappa>0$, let $\mathcal{C}_{\kappa}$ be the class of $\kappa$-Lipschitz convex functions on $[0,1]$. Assume that  $f$ and the $(f_{n})$ are in $\mathcal{C}_{\kappa}$. Pick a dense countable subset $S=\{x_{k},k \in \mathbb{N}\}$ in $[0,1]$. Since the $f_{n}'$ are bounded (by $\kappa$), by the diagonal sub-sequence method, we can find 
a sub-sequence $f'_{\sigma(n)}$ such that, for all $k$, $f'_{\sigma(n)}(x_{k})$ converges to some value $g(x_{k})$, where $g$ is non-decreasing on $S$. Call also $g$ its  unique right-continuous non-decreasing continuation on $[0,1]$. Let $x$ be a continuity point of $g$ and $\epsilon>0$. Then, let $y\leq z$ be in $S$ such that $0 \leq g(z)-g(y)\leq \epsilon$ and $y\leq x \leq z$. For $n$ large enough, since the $f_{n}'$ are non-decreasing,
\begin{eqnarray*}
-2\epsilon\leq g(x)-g(z)+g(z)-f_{n}'(z) \leq g(x)-f_{n}'(x)\\
\leq g(x)-g(y)+g(y)-f_{n}'(y) \leq 2\epsilon.
\end{eqnarray*}
Hence $f_{n}'$ converges to $g$ in each of its continuity points, i.e almost everywhere according to Riesz-Nagy theorem. Since $g$ is bounded (by $\kappa$), $f_{n}'$ converges to $g$ for the $L^1$ norm, by Lebesgue theorem. By integration, $g$ equals $f'$ a.e and  we have the result.\\

\textbf{Convex functions on [0,1]:}
Drop  the assumption that the $f_{n}$ are equilipschitz. Let $I=[a,b]$ be a compact subinterval of $]0,1[$. Then, for each $f_{n}$, for any $x$ in $I$, we have, by convexity,
\begin{eqnarray*}
\frac{f_{n}(a)-f_{n}(0)}{a} \leq f_{n}'(x) \leq \frac{f_{n}(1)-f_{n}(b)}{1-b}.
\end{eqnarray*}
Since the left and right hand terms converge to finite values as $n$ goes to $\infty$, the $f_{n}$ are equilipschitz on $I$, and using the previous result, $f_{n}'$ converges to $f'$ for the $L^1$ norm on $I$. \\

\textbf{Convex functions on K:}
Let $I_{i}, 1\leq i \leq d$, be compact intervals of $\mathbb{R}$ such that $C=I_{1}\times \dots  \times I_{d}$ is a compact rectangle contained in $ \mathop{\textrm{int}}(K)$. Take $i$ in $\{1,2,\dots,d\}$.  For $z$ in $I_{1}\times \dots \times \widehat{I_{i}}\times \dots \times I_{d-1}$ (meaning $I_{i}$ is removed from the product), denote by $I_{z}$
 the maximal segment of $C$ with direction $\mathbf{e}_i$ containing $z$. Define  $$G_{n,z,i}(x)=\langle\nabla f_{n}(z,x)-\nabla f(z,x),\mathbf{e}_i \rangle$$ where $x$ is a $1$-dimensional parameter such that $(z,x)$ describes $I_{z}$, and $C_{n,i}(z)=\|G_{n,z,i}\|_{L^1}^{I_{z}}$. Now we have, with Fubini's theorem,
\begin{eqnarray*}
\|\langle \nabla f_{n}-\nabla f,\mathbf{e}_i\rangle \|_{L^1}^C = \int_{I_{1}\times \dots\widehat{I_{i}}\dots \times I_{d-1}}C_{n,i}(z) d z,
\end{eqnarray*}
whence
\begin{equation*}
\|\nabla f_{n}-\nabla f\|_{L^1}^C=\sum_{i=1}^d \int_{I_{1}\times \dots\widehat{I_{i}}\dots \times I_{d-1}}C_{n,i}(z) d z.
\end{equation*}
Let $1\leq i \leq d$ and  $z$ in $I_{1}\times\dots \widehat{I}_{i}\dots I_{d}$.  Since $f_{n}$ uniformly converges to $f$ on $K$, it also does on a segment $J_{z}$ which interior contains $I_{z}$. 
The restriction of $f_{n}$ to $I_{z}$ is hence the restriction of a $1$-dimensional convex function that converges uniformly to the convex function $f$ on $J_{z}$, and this case has been treated in the second part of the proof. Thus, each integrand ${C}_{n,i}(z)$ converges pointwise to $0$.

 To dominate it, we write $I_{z}=:[a_{z},b_{z}]$, and call $c_{z}$ a point in $I_{z}$ where the monotone function $\langle\nabla f_{n}(z,\cdot),\mathbf{e}_i\rangle$ reaches $0$, or $c_{z}=a_{z}$ (arbitrarily) if $0$ is not reached. Then, using the monotonicity of $\langle\nabla f_{n}(z,\cdot),\mathbf{e}_i\rangle$, we have 

\begin{align*}
 C_{n,i}(z) &\leq \| f_{n}(a_{z})\|+\| f_{n}(b_{z})
\|+2\| f_{n}(c_{z})\|+\|\langle\nabla f,\mathbf{e}_i\rangle\|_{L^1}^{I_{z}} \\
&\leq 4  \| f_{n} \|_{\infty}^{C} +\|\langle \nabla f,\mathbf{e}_i\rangle\|_{L^1}^{I_{z}}\\
& \leq 4\| f \|_{\infty}^{C}+\|\langle\nabla f,\mathbf{e}_i\rangle\|_{L^1}^{I_{z}}+o(1).
\end{align*}
The last upper bound is due to the fact that the pointwise convergence of $f_{n}$ to $f$ on the convex $C$ yields uniform convergence.  $\|\langle\nabla f,\mathbf{e}_i\rangle\|_{L^1}^{I_{z}}$ is integrable because $\nabla f$ is integrable, and Lebesgue's theorem gives us the conclusion
\begin{eqnarray*}
\|\nabla f_{n}-\nabla f\|_{L^1}^C \to 0.
\end{eqnarray*}
Now, each convex compact subset of $ \mathop{\textrm{int}}(K)$ is contained in a finite union of such rectangles, and we have the conclusion.
\end{proof}

\textsl{Proof of $(\ref{eq:CvgMsr}) \Rightarrow (\ref{EquCty2})$}.\\
This result comes from the structure of convex functions, and of their gradients, the monotone functions, so we first state a result that helps us apprehend the topography of a monotone function.
\begin{Lm}
\label{LmTopMon}
There is a family $\{K_\epsilon\,\varepsilon>0\}$ of closed subsets of $K$,  satisfying

\begin{description}
\item{$\mathbf{(i)}$} $\epsilon>\epsilon' \Rightarrow K_{\epsilon} \subset K_{\epsilon'}$,
\item{$\mathbf{(ii)}$} $\bigcup_{\epsilon>0}K_\epsilon=\inr(K)$,
\item{$\mathbf{(iii)}$} For any convex function $f$, positive number $A$ and $\epsilon>0$,
\begin{eqnarray*}
\mu_{\| \nabla f\| }([A,\infty[)\leq \epsilon \Rightarrow \A z \in K_\epsilon, \| \nabla f(z) \| \leq 2A.
\end{eqnarray*}
\end{description}
\end{Lm}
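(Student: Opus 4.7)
The right class to take is a distance-to-boundary family. With $c_d>0$ the volume of a cone of height $1$ and half-angle $\pi/3$ in $\mathbb{R}^d$, I would set $\delta(\epsilon)=(2\epsilon/c_d)^{1/d}$ and
\begin{eqnarray*}
K_\epsilon:=\{z\in K~|~d(z,K^c)\geq \delta(\epsilon)\}.
\end{eqnarray*}
Each $K_\epsilon$ is closed, as a sublevel set of the continuous function $z\mapsto -d(z,K^c)$; the family is nested in the required direction because $\delta$ is strictly increasing in $\epsilon$; and $\bigcup_{\epsilon>0}K_\epsilon=\mathop{\textrm{int}}(K)$ follows from $\delta(\epsilon)\to 0$ as $\epsilon\to 0$. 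This handles (i) and (ii).

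The substance is in (iii), which I would establish by contraposition. Assume $z\in K_\epsilon$ with $\|\nabla f(z)\|>2A$ and set $v=\nabla f(z)/\|\nabla f(z)\|$. The mechanism is monotonicity of the gradient of a convex function, which forces a single large gradient value to propagate along a whole solid cone. Consider
\begin{eqnarray*}
\mathcal{C}=\{z+ru~|~0\leq r\leq \delta(\epsilon),~u\in S^{d-1},~\langle u,v\rangle\geq 1/2\}.
\end{eqnarray*}
By the distance hypothesis, $B(z,\delta(\epsilon))\subset K$, so $\mathcal{C}\subset K$; and $\lambda_d(\mathcal{C})=c_d\delta(\epsilon)^d=2\epsilon$ by the choice of $\delta$. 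For $z'=z+ru\in\mathcal{C}$ with $r>0$, monotonicity of $\nabla f$ gives $\langle \nabla f(z')-\nabla f(z),u\rangle\geq 0$, whence
\begin{eqnarray*}
\langle\nabla f(z'),u\rangle\geq \|\nabla f(z)\|\langle v,u\rangle>2A\cdot\tfrac{1}{2}=A,
\end{eqnarray*}
and therefore $\|\nabla f(z')\|>A$ on $\mathcal{C}\setminus\{z\}$. The set $\{\|\nabla f\|\geq A\}$ then has Lebesgue measure at least $2\epsilon>\epsilon$, contradicting the hypothesis.

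There is no deep obstacle; the only delicate point is the simultaneous calibration of the three geometric parameters of $\mathcal{C}$: its half-angle must be at most $\pi/3$, so that $\langle v,u\rangle\geq 1/2$ absorbs the factor $2$ in $\|\nabla f(z)\|>2A$; its height must be at most $d(z,K^c)$, so that $\mathcal{C}\subset K$; and its volume must strictly exceed $\epsilon$, so that the contradiction bites. Fixing the half-angle at $\pi/3$ leaves only the height as a free parameter, and the choice above satisfies all three constraints at once. A minor regularity worry is that $\nabla f$ need not be defined everywhere; this is harmless since one may read the inequalities for almost every $z'\in\mathcal{C}$, or equivalently phrase the monotonicity step in terms of the subdifferential.
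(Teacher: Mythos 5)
Your proof is correct and rests on exactly the same mechanism as the paper's: monotonicity of $\nabla f$ forces a gradient of norm $>2A$ at $z$ to propagate (with a factor $\tfrac12$ loss, hence the half-angle giving $\langle u,v\rangle\geq\tfrac12$) along a solid cone issued from $z$, whose volume then bounds $\mu_{\|\nabla f\|}([A,\infty[)$ from below. The only difference is cosmetic: you make $K_\epsilon$ explicit as a distance-to-boundary sublevel set with radius calibrated so the truncated cone has volume $2\epsilon$, whereas the paper defines $K_\epsilon=\{z~|~\epsilon(z)\geq\epsilon\}$ with $\epsilon(z)=\inf_{u}\lambda_d(Z(z,u))$; your choice has the minor advantage that properties (i), (ii) and closedness are immediate.
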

Hence one can control the locations of points where $f$'s gradient reaches high values. In particular, $\| \nabla f \|$ cannot be ``too large'' far from the edges of $K$.
\begin{proof}

Any convex function $f$  on $K$ satisfies
\begin{eqnarray*}
\A ~z,\zeta \in K, \langle \nabla f(z)-\nabla f(\zeta),z-\zeta  \rangle \geq 0.
\end{eqnarray*}
It readily follows from the fact that the restriction of $f$ to $[z,\zeta]$ is convex.
Now, for $z \in K, u \in \mathbb{R}^d$, we introduce the affine cone

\begin{eqnarray*}
Z(z,u)=\{y \in K_d:~\langle y-z,u \rangle \geq \frac{1}{2}\| z-y \| \| u \| \}.
\end{eqnarray*}
We have the property that 
\begin{eqnarray*}
y \in Z(z, \nabla f(z)) \Rightarrow \| \nabla f(y) \| \geq \frac{1}{2} \| \nabla f(z) \|.
\end{eqnarray*}

Indeed, let $y$ be in $Z(z,\nabla f(z))$.
\begin{eqnarray*}
\| \nabla f(y) \| \| y-z \| \geq \langle  \nabla f(y),y-z \rangle \geq \langle  \nabla f(z),y-z \rangle \geq \frac{1}{2} \| \nabla f(z) \|  \| y-z \| .\\
\end{eqnarray*}

It means that  $y$ in the cone $Z(z,\nabla f(z))$ cannot have a gradient too small, due to the monotonicity property.
Now we set $\epsilon(z)=\inf_{u \in \mathcal{S}^{d-1}} \lambda_d(Z(z,u))$, which simply plays the role of a lower bound for $\lambda_d(Z(z,\nabla f(z)))$.
We have, for $z\in K$,
\begin{eqnarray}
\label{machintruc}
\lambda_d(\{y \in K:~ \|\nabla f(y)  \| \geq \frac{1}{2} \| \nabla f(z)  \|\}) \geq \lambda_d(Z(z,\nabla f(z)) \geq \epsilon(z).
\end{eqnarray}
Now we set, for $\epsilon>0$, $K_\epsilon=\{z \in K:~\epsilon(z) \geq \epsilon\}$. For  $z$ in $K_{\epsilon}$, $$ \| \nabla f(z)  \| \geq 2A \Rightarrow  \mu_{ \|\nabla  f \| }([A,\infty[) \geq  \lambda_d(\{y \in K:~ \|\nabla f(y)  \| \geq \frac{1}{2} \| \nabla f(z)  \|\} \geq \epsilon(z).$$

Hence, given any positive number $A$, if $\nabla f$ satisfies

\begin{eqnarray*}
 \mu_{ \|\nabla f_n  \| }([A,\infty[)\leq \varepsilon
\end{eqnarray*}
for some $\varepsilon>0$, then,  according to (\ref{machintruc}), it follows that for $z\in K_\varepsilon$
\begin{eqnarray*}
  \epsilon(z) \geq \varepsilon, \textrm{ and so }\| \nabla f(z)  \| \leq 2A.
\end{eqnarray*}

\end{proof}
To finish the proof of the theorem, we have to show that $f_n$ converges to $f$ on $\mathop{\textrm{int}}(K)$. In a first time we will use Ascoli-Arzela theorem to show that the $f_n$ uniformly converge on every $K_\epsilon$, and by consistency they converge pointwise on $\mathop{\textrm{int}}(K)$. Then we will show that the limit can be nothing but $f$.\\

Since $\mu_{\nabla f_n}$ weakly converges to the finite measure $\mu_{\nabla f}$, it is a tight family of measures. For all $\epsilon>0$, we can find $A>0$ such that, for all $n$ in $\mathbb{N}$,
\begin{eqnarray*}
 \mu_{ \|\nabla  f_n\| }([A,\infty[)\leq \epsilon.
\end{eqnarray*}
Hence, according to Lemma \ref{LmTopMon},
\begin{eqnarray*}
\A n \in \mathbb{N}, \A z \in K_\epsilon, \| \nabla f_n(z) \| \leq 2A.
\end{eqnarray*}
For a function $u$, call $u^\varepsilon$ its restriction to $K_{\varepsilon}$.
According to Ascoli-Arzela criterion, we know that for all $\epsilon>0$, $\{f_n^\varepsilon;~ n \geq 1\}$ is a relatively compact family for the uniform convergence. Now, let $\epsilon$ be a positive number. There exists a convex function $f_\epsilon$ and a sub-sequence $f_{\varphi^\epsilon(n)}$ such that $f_{\varphi^\epsilon(n)}\to f_\epsilon$ uniformly on $K_{\epsilon}$.
 Let us show that $f_{\epsilon}$ coincides with $f$, which means that $f$ is in fact the limit as only possible limit for a sub-sequence.

By taking iteratively subsequences with the same arguments, one can complete $f_{\varepsilon}$ to a function $\tilde{f}$ on all $\inr(K)$ such that, for each $k\geq 1$, $f_{\varphi_{k}(n)}(z)\to \tilde{f}(z)$ for $z$ in $K_{\varepsilon/k}$, where $\varphi_{1}=\varphi^\varepsilon$, and $\varphi_{k}(n)$ is a subsequence of $\varphi_{k-1}(n)$.

In particular, using a diagonal extraction, there is a subsequence   $f_{\phi(n)}$ that converges pointwise to $\tilde{f}$ on $K_{\varepsilon}$. According to the result $(\ref{EquCty1}) \Rightarrow (\ref{eq:CvgMsr})$ proved earlier, we know that $\mu_{\nabla f_{n}} \Rightarrow \mu_{\nabla \tilde{f}}$, and so, by unicity of the limit, $\mu_{\nabla \tilde{f}}=\mu_{\nabla f}$.

Hence $\nabla \tilde{f}$ and $\nabla f$ are two monotone functions on $K$ whose distributions coincide. The uniqueness in Brenier's theorem (Th. \ref{ThmBre}) ensures us that they are equal a.e..
We have proved that any cluster point $f^\epsilon$ of $(f_n(z), z \in K_\epsilon)$ is equal to $f$ on $K_{\epsilon}$. Hence $f$ is the limit of $f_n$ for the uniform convergence on $K_\epsilon$. Since for convex functions on a convex compact set, uniform convergence and pointwise convergence are equivalent, we have
\begin{eqnarray*}
\A \epsilon>0, \| f_n(z)-f(z) \|_{\infty}^{K_{\epsilon}}\to 0
\end{eqnarray*}
which yields the result.

\subsection{Proof of Theorem \ref{CorCvgGss}}
\label{PrfThmCvgMunGss}

For the sake of clarity, we drop the exponent ``$\mathcal{T}$'' in the proof, so $\varphi_{n}^\mathcal{T}=\varphi_{n}$.
We consider here the quantity
\begin{equation*}
\varphi_{n}(h)=\int_{K_{d}}\exp(\imath \langle h,Y_{n}(z)\rangle) dz.
\end{equation*}
This integral can be discretized in a sum over all simplices  $S$ of $\mathcal{T}_{n}$. For $S$ in $\mathcal{T}_{n}$, denote by $Y_{n}(S)$ (resp. $\Lambda_{n}(S)$) the common value of $Y_{n}$ (resp. $\Lambda_{n}$)  over $S$. We have
\begin{equation*}
\varphi_{n}(h)=\sum_{S\in\mathcal{T}_{n}}\exp(\imath \langle h, Y_{n}(S)\rangle)\vol(S).
\end{equation*}

To prove that $\varphi_{n}(h)-\mathbb{E}\varphi_{n}(h)$ converges a.s. to $0$, we study the summability of the 4-th order moment
\begin{align*}
\mathbb{E}|\varphi_{n}(h)&-\mathbb{E}\varphi_{n}(h)|^4\\
&=\mathbb{E}\left[(\varphi_{n}(h)-\mathbb{E}\varphi_{n}(h))^2\overline{\varphi_{n}(h)-\mathbb{E}\varphi_{n}(h)}^2\right]\\
&=\mathbb{E}\prod_{k=1}^4 \left[\sum_{S_{k}\in\mathcal{T}_{n}} \vol(S_{k})(\exp(\imath \langle h ,\varepsilon_{k}Y_{n}(S_{k})\rangle)-\mathbb{E}\exp(\imath \langle h ,\varepsilon_{k}Y_{n}(S_{k})\rangle))\right]
\end{align*}
with $\varepsilon_{1}=\varepsilon_{2}=-\varepsilon_{3}=-\varepsilon_{4}=1$. Since $\varepsilon_{k}Y_{n}(S_{k})$ is a Gaussian vector with covariance matrix $\Lambda_{n}(S_{k})$, we have
\begin{align*}
\mathbb{E}|\varphi_{n}(h)&-\mathbb{E}\varphi_{n}(h)|^4\\
&=\mathbb{E}\prod_{k=1}^4 \left[\sum_{S_{k}\in\mathcal{T}_{n}} \vol(S_{k})(\exp(\imath \langle h ,\varepsilon_{k}Y_{n}(S_{k})\rangle)-\exp(-1/2 \langle h,\Lambda_{n}(S_{k}) h\rangle))\right].
\end{align*}
If one develops the previous quantity, one obtains the sum of all products of four terms of the form  $\exp(\imath \langle h ,\varepsilon_{k}Y_{n}(S_{k})\rangle)$ or $-\exp(-1/2 \langle h,\Lambda_{n}(S) h\rangle)$.

 Denote by $\mathcal{P}$ the class of all subsets of $\{1,2,3,4\}$. Summing over all possible quadruples $Q=(S_{1},S_{2},S_{3},S_{4})$, and all possibles ways to write four terms of one of the two forms described above, one obtains, with $\vol(Q)=\prod_{k=1}^4\vol(S_{k})$,
\begin{align}
\label{eq:development-4thmoment}
\nonumber\mathbb{E}&|\varphi_{n}(h)-\mathbb{E}\varphi_{n}(h)|^4\\
\nonumber&=\sum_{Q \in \mathcal{T}_{n}^4}\vol(Q) \sum_{P\in\mathcal{P}_{4}}\mathbb{E}\prod_{k\in P}\exp(\imath \langle h, \varepsilon_{k}Y_{n}(S_{k})\rangle)\prod_{k\notin P} (-\exp(-\langle h,\Lambda_{n}(S_{k}) h\rangle))\\
&=\sum_{Q \in \mathcal{T}_{n}^4}\vol(Q) \sum_{P\in\mathcal{P}_{4}}(-1)^{|4-P|}\mathbb{E}\prod_{k\in P}\exp(\imath \langle h, \varepsilon_{k}Y_{n}(S_{k})\rangle)\prod_{k\notin P} \exp(-\langle h,\Lambda_{n}(S_{k}) h\rangle).
\end{align}
Since $\sum_{k\in P}\varepsilon_{k}Y_{n}(S_{k})$ is a Gaussian vector, one gets  
\begin{equation*}
\mathbb{E}\prod_{k\in P}\exp(\imath \langle h, \varepsilon_{k}Y_{n}(S_{k})\rangle)=\exp\left(-1/2\left\langle h, \cov\left( \sum_{k\in P}\varepsilon_{k}Y_{n}(S_{k})\right)h\right\rangle\right).
\end{equation*}
The point of this computation is that $\cov(\sum_{k}\varepsilon_{k}Y_{n}(S_{k}))$ should be close to $\sum_{k\in P}\Lambda_{n}(S_{k})$. Indeed, if simplices $S_{1},S_{2},S_{3},S_{4}$ are far from each other, the corresponding random variables $Y_{n}(S_{k}), k=1,\dots,4$ have small dependancy, provided $\sigma$ is regular enough. Thus we introduce the matrix
\begin{align*}
\chi_{n}^P(Q)&=\cov\left(\sum_{k\in P}\varepsilon_{k}Y_{n}(S_{k})\right)-\sum_{k\in P}\cov(Y_{n}(S_{k}))\\
&=\sum_{k\neq k'\in P}\varepsilon_{k}\varepsilon_{k'}\cov(Y_{n}(S_{k}),Y_{n}(S_{k'})).
\end{align*}
We can decompose the summand in (\ref{eq:development-4thmoment}) in 
\begin{equation*}
\mathbb{E}\prod_{k\in P}\exp(\imath \langle h, \varepsilon_{k}Y_{n}(S_{k})\rangle)\prod_{k\notin P} \exp(-\langle h,\Lambda_{n}(S_{k}) h\rangle)=\psi_{n}(Q)\exp(-1/2\langle h,\chi_{n}^P(Q) h \rangle)
\end{equation*}
where
\begin{equation*}
\psi_{n}(Q)=\prod_{k=1}^4 \exp(-1/2\langle h,\Lambda_{n}(S_{k}) h\rangle)=\exp\left(-1/2\left\langle h,\sum_{k=1}^4 \Lambda_{n}(S_{k}) h \right\rangle \right)
\end{equation*}
does not depend on $P$. If we develop the exponential at the 2d order, we have
\begin{align}
\label{eq:dev-exponential}
\nonumber\exp & \left(-1/2 \langle  h,\chi_{n}^P(Q)h \rangle \right)=\\
&1-\frac{1}{2} \langle h,\chi_{n}^P(Q)h \rangle+\frac{1}{8}\exp(-\theta/2   \langle h,\chi_{n}^P(Q)h \rangle ) (\langle h,\chi_{n}^P(Q)h \rangle)^2
\end{align}
for some $\theta$ in $[0,1]$.

For $0 \leq c \leq 4$, let $\mathcal{P}_{c}$ be the class of elements of $\mathcal{P}$ that have cardinality $c$. Remark that
 \begin{eqnarray*}
\sum_{P \in \mathcal{P}} (-1)^{|P|}&=&\sum_{c=0}^4\sum_{P\in \mathcal{P}_c} (-1)^{c}\\
&=&1-4+6-4+1=0.
\end{eqnarray*}
In view of computing the first order term in (\ref{eq:development-4thmoment}), we have for $1\leq i,j \leq d$
\begin{align*}
\sum_{Q\in \mathcal{T}_{n}^4}&\vol(Q) \psi_n(Q) \sum_{P \in \mathcal{P}} (-1)^{|P|} \chi_{n}^P(Q)_{i,j} \\
&=\sum_{P \in \mathcal{P}}(-1)^{|P|}\sum_{k,k' \in \mathcal{P}:\,k \neq k'}\epsilon_k\epsilon_{k'} \sum_{Q \in \mathcal{T}_n^4}\vol(Q)\cov(Y_{n}(S_{k}),Y_{n}(S_{k'}))\psi_n(Q)\\
&=\sum_{1\leq k,k'\leq 4\atop \,k\neq k' }\epsilon_k\epsilon_{k'} \sum_{Q\in \mathcal{T}_n^4} \vol(Q) \cov(Y_{n}(S_{k}),Y_{n}(S_{k'}))\psi_n(Q)\sum_{P \in \mathcal{P}:\,k,k'\in P}(-1)^{|P|}.
 \end{align*}
Take $k \neq k' $ in $\{1,2,3,4\}$. There are exactly one $P$ of $\mathcal{P}_2$, 2 sets $P$ in $\mathcal{P}_3$ and $1$ set of $\mathcal{P}_4$ that contain $q$ and $q'$. Hence
\begin{eqnarray*}
\sum_{{P \in \mathcal{P}}\atop {P \ni kk'}}(-1)^{|P|}=1-2+1=0.
\end{eqnarray*}
Thus, when we inject the development (\ref{eq:dev-exponential}) in the sum (\ref{eq:development-4thmoment}), the main and first order terms vanish, and only the second order term remains,
\begin{align*}
\mathbb{E}|\varphi_{n}(h)&-\mathbb{E}\varphi_{n}(h)|^4\\
&=\frac{1}{8}\sum_{Q \in \mathcal{T}_{n}^4} \vol(Q)\psi_{n}(Q)\sum_{P\in \mathcal{P}}(-1)^{|P|}\exp(-\theta/2 \langle h,\chi_{n}^P(Q) h \rangle) \langle h, \chi_{n}^P(Q) h\rangle^2\\
&=O\left( \sum_{Q \in \mathcal{T}_{n}^4} \vol(Q)\sum_{P\in \mathcal{P}}\psi_{n}(Q)\exp(-\theta/2 \langle h,\chi_{n}^P(Q) h \rangle) \langle h, \chi_{n}^P(Q) h\rangle^2\right).
\end{align*}
Since $\psi_{n}(Q)$ is a product of characteristic functions, it is smaller than $1$.
If for some $Q,P,$ $\langle h,\chi_{n}^P(Q) h \rangle$ is positive, then the term  $\psi_{n}(Q)\exp(-\theta/2 \langle h,\chi_{n}^P(Q) h \rangle)$ is smaller than $1$.
If on the contrary it is negative, then $-\theta/2 \langle h,\chi_{n}^P(Q) h \rangle \leq -1/2  \langle h,\chi_{n}^P(Q) h \rangle $, and
\begin{align*}
&\psi_{n}(Q)\exp(-\theta/2 \langle h,\chi_{n}^P(Q) h \rangle)\leq \psi_{n}(Q)\exp(-1/2 \langle h,\chi_{n}^P(Q) h \rangle)\\
&= \exp\left(-1/2\left\langle h,\sum_{k\notin P}\cov(S_{k})h \right\rangle\right) \exp\left(-1/2 \left\langle h, \left(\sum_{k\in P}\cov(Y_{n}(S_{k}))+ \chi_{n}^P(Q)\right) h  \right\rangle\right)\\
& = \exp\left(-1/2 \left\langle h,\sum_{k\notin P}\cov(S_{k})h \right\rangle\right)  \exp\left(-1/2 \left\langle h, \cov\left(\sum_{k\in P}\varepsilon_{k}Y_{n}(S_{k})\right) h  \right\rangle\right).\\
\end{align*}
This is again a product of characteristic functions, hence smaller than $1$, and we have
\begin{align*}
\mathbb{E}|\varphi_{n}(h)&-\mathbb{E}\varphi_{n}(h)|^4 =O\left( \sum_{Q \in \mathcal{T}_{n}^4;P\in \mathcal{P}}\vol(Q) \langle h,\chi_{n}^P(Q)h\rangle^2\right).
\end{align*}
By writing explicitly $\chi_{n}^P(Q)$, we arrive at 
\begin{align}
\label{eq:final-bound-4thmoment}
\nonumber\mathbb{E}&\varphi_{n}(h)-\mathbb{E}\varphi_{n}(h)|^4 \\
\nonumber&=O\left(\sum_{Q\in\mathcal{T}_{n}^4}\vol(Q)|\mathbb{E}Y_{n,u}(S_{1})Y_{n,v}(S_{2})
||\mathbb{E}Y_{n,u}(S_{3})Y_{n,v}(S_{4}) |\right)\\
&=O\left(\sum_{S,S'\in\mathcal{T}_{n}}\vol(S)\vol(S')|\mathbb{E}Y_{n,u}(z)Y_{n,v}(\zeta)|\right)^2
\end{align}
where $[z,z+n^{-1}u]$ and $[\zeta,\zeta+n^{-1}v]$ are  edges of $S$ and $S'$, respectively. Applying (\ref{eq: cov-gradient}) yields the result.

\section*{Acknowledgements}
The authors wish to warmly thank Pr.Ilya Molchanov, who suggested the use of triangulations, and more generally for enriching discussions that contributed  to bring this article to its maturity. We also are very grateful to the referee, who, by its careful reading and judicious remarks, helped improving this article.

\end{document}